\def\l1{{\lambda}_1}
\newcommand{\KS}{Kuramoto-Sivashinsky Equation}
\newcommand{\KSII}{two-dimensional Kuramoto-Sivashinsky Equation}
\newcommand{\pd}{\partial}
\def\x1{{\xi }_{xx}}
\def\x2{{\xi }_{yy}}
\def\x3{{\xi }_{xy}}
\def\e1{{\eta }_{xx}}
\def\e2{{\eta }_{yy}}
\def\e3{{\eta }_{xy}}
\def\h{\frac{\partial }{\partial x}}
\newtheorem{thm}{Theorem}[section]
\newdefinition{defn}{Definition}[section]
\theoremstyle{remark}
\newtheorem{case}{Case}
\newtheorem{subcase}{$\bullet$ Subcase}[case]
\newtheorem{rem}[defn]{Remark}
\newtheoremstyle{syms}
  {3pt}		
  {3pt}		
  {}			
  {}			
  {\bfseries}	
  {:}			
  {\newline}	
  {}			
\theoremstyle{syms}
\newtheorem*{pt}{$\boldsymbol{\partial_t}$}
\newtheorem*{px}{$\boldsymbol{\partial_x}$}
\newtheorem*{py}{$\boldsymbol{\partial_y}$}
\newtheorem*{pxpy}{$\boldsymbol{y\partial_x-x\partial_y}$}
\newcommand{\beqn}{\begin{eqnarray*}}
\newcommand{\eeqn}{\end{eqnarray*}}
\newcommand{\beqnn}{\begin{eqnarray}}
\newcommand{\eeqnn}{\end{eqnarray}}
\newcommand{\bb}{\begin{equation}}
\newcommand{\ee}{\end{equation}}
\newcommand{\ba}{\begin{array}}
\newcommand{\ea}{\end{array}}
\def\ps@pprintTitle{%
  \let\@oddhead\@empty
  \let\@evenhead\@empty
  \def\@oddfoot{\reset@font\hfil\thepage\hfil}
  \let\@evenfoot\@oddfoot
}
\begin{document}


\title{ Group Classification and Conservation Laws for  a two-dimensional Generalized Kuramoto-Sivashinsky Equation}

\author[imecc]{Y.~Bozhkov}
\ead{bozhkov@ime.unicamp.br}
\author[imecc]{S.~Dimas\corref{cor}}
\ead{spawn@math.upatras.gr }
\address[imecc]{Instituto de Matem\'atica, Estat\'\i stica e Computa\c c\~ao Cient\'\i fica - IMECC\\
Universidade Estadual de Campinas - UNICAMP\\
Rua S\'ergio Buarque de Holanda, 651\\
$13083$-$970$ - Campinas - SP, Brazil}

\cortext[cor]{Corresponding author}

\begin{keyword}
	anisotropic Kuramoto-Sivashinsky equation\sep nonlinear self-adjointness\sep group classification\sep conservation laws\sep computer algebra
	\MSC[2010]{76M60;35A30;70G65}
\end{keyword}

\begin{abstract}
	The two-dimensional  an\-iso\-tro\-pic Kuramoto-Si\-va\-shin\-sky equation is a  forth-order nonlinear evolution equation in two spatial dimensions that arises in sputter erosion and epitaxial growth on vicinal surfaces. A generalization of this equation is proposed and studied via group analysis methods. The complete group classification of this generalized Kuramoto-Sivashinsky equation is carried out, it is classified according to the property of the self-adjointness and the corresponding conservation laws are established. 
\end{abstract}

\maketitle


\section{Introduction}\vphantom{\cite{EoM22687}}

The celebrated \KS\ (KSE) 
\begin{equation}\label{KS1}
	u_t+u_{xxxx}+u_{xx}+\frac{1}{2}u_x^2=0,
\end{equation}
where $u=u(x,t)$, is an equation that for nearly half a century has attracted the attention of many researchers from various areas due to its simple and yet rich dynamics. It first appeared in mid-1970s by Kuramoto in the study of angular-phase turbulence for a system of reaction--diffusion equations modeling the Belou\-sov--Zhabotinskii reaction in three spatial  dimensions \cite{Ku78,KuTsu75, KuTsu76} and independently by Sivashinsky in the study of hydrodynamic instabilities in laminar flame fronts \cite{Si77, MiSi77,Si80}. 

In a physical context equation \eqref{KS1} is used to model continuous media that exhibits chaotic behavior such as weak turbulence on interfaces between complex flows (quasi-planar flame front and the fluctuation of the positions of a flame front, fluctuations in thin viscous fluid films flowing over inclined planes or vertical walls, dendritic phase change fronts in binary alloy mixtures),  small perturbations of a metastable planar front or interface (spatially uniform oscillating chemical reaction in a homogeneous medium) and physical systems driven far from the equilibrium due to intrinsic instabilities (instabilities of dissipative trapped ion modes in plasmas and phase dynamics in reaction-diffusion systems) \cite{CoKroTaRo76, LaMaRuTa75, Ma88,SiMi80, HaZha95, CroHo93}.

As a dynamical system the KSE is known for its chaotic solutions and complicated behavior due to the terms that appear. Namely, the $u_{xx}$ term acts as an energy source and has a destabilizing effect at large scale, the dissipative $u_{xxxx}$ term provides dumping in small scales and, finally, the nonlinear term provides  stabilization by transferring energy between large and small scales. Because of this fact, equation \eqref{KS1} was studied extensively as a paradigm of finite dynamics in a partial differential equation (PDE). Its multi-modal, oscillatory and chaotic  solutions have been investigated \cite{HyNi86,HyNiZa86,KeNiSco90, LvoLePaPro93, Mi86}, its non-integrability was established via its Painlev\'e analysis \cite{CoMu89, NiScheTe85,ThuFri86} and due to its bifurcation behavior a connection to low finite-dimensional dynamical systems is established \cite{Te97, WiHo99}. 

The generalization of KSE to two dimensions comes naturally, the \KSII,
 \begin{equation}\label{KS2}
	u_t+\nabla^4u+\nabla^2u+(\nabla u) \cdot (\nabla u)=0,
\end{equation}
where now $u=u(x,y,t)$ and $\nabla^2=\nabla\cdot\nabla,\ \nabla^4=\nabla\cdot\nabla(\nabla\cdot\nabla)$. Eq.~\eqref{KS2} has equally attracted much attention because of the same spatiotemporal chaos properties that exhibits and its applications in modeling complex dynamics in hydrodynamics \cite{BoChoHwa99,DroZhaLuWa99,  GaMiPo2k9,LuBo98}.  Nevertheless, due to the additional spatial dimension equation \eqref{KS2} is very challenging and even its well-posedness is still an open problem \cite{De2k9,JaHaPa93}. 

One generalization of equation \eqref{KS2} of much interest is the \textit{anisotropic} \KSII,
 \begin{equation}\label{aKS2}
	u_t=\frac{1}{2}u_x^2+\frac{\beta}{2}u_y^2-u_{xx}-\alpha u_{yy}-u_{xxxx}-2u_{xxyy}-u_{yyyy},
\end{equation}
where the two real parameters $\alpha,\beta$ control the anisotropy of the linear and the nonlinear term, respectively. In other words, the stability of the solutions of equation \eqref{aKS2}. The anisotropic \KSII, due to the fact that it describes linearly unstable surface dynamics in the presence of in-plane anisotropy, has a wide range of applications. For instance, as a model for the nonlinear evolution of sputter-eroded surfaces and describing the epitaxial growth of a vicinal surface destabilized by step edge barriers, for further details see \cite{RoKru95} and the refereces therein, in particular \cite{MaBa97}.

This paper focuses in the following generalization of the anisotropic KSE \eqref{aKS2}:
\begin{equation}\label{gKS2}
	u_{  t}=\frac{1}{2}u_x^2+h(u) u_y^2+r(u)u_{xx}+g(u) u_{  yy}-u_{  xxxx}-2 u_{  xxyy}-u_{  yyyy}+f(u)
\end{equation}
and its study under the prism of Lie point symmetries and conservation laws. (Here, $f,h,g$ and $r$ are considered smooth functions of $u=u(x,y,t)$.)

The symmetries of a differential equation are of fundamental importance since they are a structural property of the equation. In addition, finding the symmetries of a differential equation is an analytic method that can be applied to integrable and non-integrable equations alike. Nevertheless the symmetry analysis is constrained to rudimentary generalizations of equation \eqref{KS2} \cite{NaAha2k11, NaAha2k12}. Things are worse when one looks for conservation laws. For the complexity of the calculations involved, the research is constrained to generalizations of the one dimensional \KS\ \cite{BruGaIbra2k9}.

In this frame two different classifications are performed: the complete group classification and a classification with respect to the property of self-ad\-joint\-ness. Having the symmetries for each possible case and the self-adjoint cases at hand, the conservations laws for that system by using the Noether operator $\mathcal{N}$ are obtained, see also  \cite{Ibra2k7, Ibr2k11a, Ibr2k11b}.

Calculating the symmetries of the system \eqref{aKS2}, obtaining its adjoint system and applying the Noether operator to obtain the conserved vectors are well-defined algorithmic procedures. Nevertheless,  the calculations involved are usually very difficult and extensive even for the simplest equations.  Thus, it may become very tedious and error prone. For that reason the use  of computer algebra systems like {\it Mathematica, Maple, Reduce}, etc. and of special symbolic packages that are build based on them is very crucial. For this work the \textit{Mathematica} package SYM \cite{DiTs2k5a,DiTs2k6,Dimas2k8} was extensively used for all the results that follow. Namely, for obtaining the symmetries of the system, to get and simplify the adjoint system and the conserved vectors that emerge from the use of the Noether operator.

In Section 2 the definitions and the analytical tools used are introduced. Section 3 explores the self-adjointness of equation \eqref{gKS2}. Then, the complete group classification of eq. \eqref{gKS2} is carried out in Section 4 followed by Section 5 where the conservation laws are established. Finally in Section 6 some comments and concluding remarks are presented.

\section{Notation and methodology}

We shall employ in this work the two analytical tools: the symmetry analysis and the use of the Noether operator identity for the explicit construction of conservation laws. For both the necessary definitions  of the notions that will be encountered  in the main body of this work are illustrated below, adapted accordingly to the needs of the present paper. 

For brevity, we denote:
\begin{equation*}
	\begin{split}
		&\Delta(x,y,t,u,u_x,\dots,u_{yyyy}) = \\
		&\quad\frac{1}{2}u_x^2+h(u) u_y^2+r(u)u_{xx}+g(u) u_{  yy}-u_{  xxxx}-2 u_{  xxyy}-u_{  yyyy}-u_{  t}+f(u).
	\end{split}
\end{equation*}

\subsection{Modern group analysis}

The symmetry or modern group analysis is a valuable analytic tool for the investigation of differential equations. For a full treatise of the subject there is a wealth of classical texts that encompass all aspects of the theory \cite{BluAncChe2k9,BluKu89,Hydon2k, Ibra2k1, Olver2k, Ovsiannikov82, Stephani90} . 
\begin{defn}
	Let the differential operator,
	 \bb\label{sg}
	 	X={\xi }^1(x,y,t,u)\pd_x+{\xi }^2(x,y,t,u)\pd_y+{\xi }^3(x,y,t,u)\pd_t+ \eta(x,y,t,u)\pd_u.
	\ee
	 This operator, from now on called \textit{infinitesimal generator}, determines a \textit{Lie point symmetry} of equation \eqref{gKS2}, if and only if, its action on the equation will be, modulo the equation itself,  identically zero, that is:
\bb\label{lsc}
	\left.X^{(4)}\left[\Delta(x,y,t,u,u_x,\dots,u_{yyyy})\right]\right\rvert_{\Delta(x,y,t,u,u_x,\dots,u_{yyyy})=0}\equiv 0,
\ee
where $X^{(4)}$ is the fourth order prolongation of the operator $X$ given by 
\bb\label{prolong}
	X^{(4)}= X +\sum_{s=1}^4{\eta }^{(s)}_{i_1\dots i_s}\frac{\partial}{\partial u_{i_1\dots i_s}},\ i_n=1,2,3
\ee
with
 \begin{align*}
 	{\eta }^{(1)}_{i}&=D_i\eta - (D_i{\xi }^j)u_j,& {\eta }^{(s)}_{i_1\dots i_s} &=D_{i_s} {\eta }^{(s-1)}_{i_1\dots i_{s-1}} - (D_{i_s}{\xi }^{j})u_{i_1\dots i_{s-1} j}
 \end{align*}
 and the partial derivatives denoted by
 $$
 	u_i=\frac{\partial u}{\partial x^i},\, (x^1,x^2,x^3)=(x,y,t).
 $$
\end{defn}
From the condition \eqref{lsc}, called linearized symmetry condition, an overdetermined system of linear partial differential equations emerges. By solving this system, called the determining equations, we determine the coefficients ${\xi }^i,\, \eta$ of the infinitesimal generator. Hence, the point symmetries of the equation. The group classification occurs in that phase: the determining equations contain also  the functions $f, g,h,r$. The group classification is performed by investigating each case where specific relations among the unknown elements remove equations from the set of determining equations, and by doing that enlarging the set of solutions. 

\subsection{The adjoint and self-adjoint concept and conservation laws}

In accordance to \cite{Ibra2k7,Ibr2k11a,Ibr2k11b} we introduce the required notions that will enable us  to construct conservation laws for the equation \eqref{gKS2}.
\begin{defn}
	The \textit{adjoint equation} to equation \eqref{gKS2} is
	\bb\label{adjEq}
		\frac{\delta\mathcal L}{\delta u}=0,
	\ee
	where $\mathcal L$ is the \textit{formal Lagrangian} given by
	$$
		\mathcal L=\upsilon(x,y,t)\Delta(x,y,t,u,u_x,\dots,u_{yyyy}).
	$$
	Here $\upsilon$ is a new dependent variable, called also \emph{nonlocal variable},  and $\delta/\delta u$ is the Euler-Lagrange operator
	$$
		\frac{\delta}{\delta u}=\frac{\partial}{\partial u}+\sum_{i_x+i_y+i_t=1}^\infty(-1)^{i_x+i_y+i_t} D_x^{i_x}D_y^{i_y}D_t^{i_t}\frac{\partial}{\partial u_{i_xx\,i_yy\,i_tt}},
	$$
	with $u_{i_xx\,i_yy\,i_tt}$ denoting $\partial^{i_x+i_y+i_t}u/\partial x^{i_x}\partial y^{i_y}\partial t^{i_t},\, i_x,i_y,i_t\ge0$.
\end{defn}

\begin{defn}
	We say that the equation \eqref{gKS2} is \textit{strictly self-adjoint} if the adjoint equation \eqref{adjEq} becomes equivalent to the equation \eqref{gKS2} after the substitution $\upsilon=u$:
	$$
		\frac{\delta\mathcal L}{\delta u} = \lambda \Delta(x,y,t,u,u_x,\dots,u_{yyyy}),
	$$
	with $\lambda$  a generic coefficient.	
\end{defn}

\begin{defn}
	We say that the equation \eqref{gKS2} is \textit{quasi self-adjoint} if the adjoint equation \eqref{adjEq} becomes equivalent to the equation \eqref{gKS2} after the substitution $\upsilon=\phi(u)$, where $\phi(u)\ne0$.	
\end{defn}

\begin{defn}
	We say that the equation \eqref{gKS2} is \textit{nonlinearly self-adjoint} if the adjoint equation \eqref{adjEq} becomes equivalent to the equation \eqref{gKS2} after the substitution $\upsilon=\phi(x,y,t,u)$, where $\phi(x,y,t,u)\ne0$.
\end{defn}

\begin{rem}
	The concept of the nonlinear self-adjointness can be further extended by considering differential substitutions  of the form
	$$
		\upsilon=\phi(x,y,t,u,u_{(1)},\dots,u_{(r)}),
	$$
	where $u_{(r)}$ are the derivatives of $u$ of order $r$.
\end{rem}

\begin{rem}
	From the above definitions it is obvious that if an equation is strictly or quasi self-adjoint then it is also and nonlinearly self-adjoint.
\end{rem}

\begin{thm}[Explicit formula for conserved vectors]
	Let equation \eqref{gKS2} be nonlinearly self-adjoint and a \eqref{sg} its Lie point symmetry. A conserved vector can be constructed by the the following formula:
	\begin{equation}\label{cvf}
		C^i=\xi^i\mathcal L + \sum_{i_x+i_y+i_t=0}^\infty D_x^{i_x}D_y^{i_y}D_t^{i_t}(W) \frac{\delta^*\mathcal L}{\delta^* u_{i\ i_xx\ i_yy\ i_tt}},\, i=x,y,t,\ i_i\ge0
	\end{equation}
	where
	$$
		W=\eta-\xi^1u_x-\xi^2u_y-\xi^3u_t,
	$$
	$\delta^*/\delta^*u$ is the ``weighted" Euler-Lagrange operator
	\begin{equation*}
		\begin{split}
			&\frac{\delta^*\mathcal L}{\delta^* u_{i_xx\, i_yy\, i_tt}}=\frac{\partial}{\partial u}\\
			&+\sum_{s=j_x+j_y+j_t=1}^\infty(-1)^{s} \frac{\binom{s}{j_x,j_y,j_t}}{\binom{s+j_x+j_y+j_t}{i_x+j_x,i_y+j_y,i_t+j_t}}D_x^{j_x}D_y^{j_y}D_t^{j_t}\frac{\partial}{\partial u_{(i_x+j_x)x\,(i_y+j_y)y\,(i_t+j_t)t}}
		\end{split}
	\end{equation*}
	with $\binom{N}{i_1,i_2,\dots,i_r}=\frac{N}{i_1!i_2!\dots i_r!},\, N=i_1+i_2+\cdots+i_r$, the multinomial and $\mathcal L$ the formal Lagrangian after substituting with $\upsilon=\phi(x,y,t,u)$.
\end{thm}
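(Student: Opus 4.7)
The plan is to deduce \eqref{cvf} from the Noether--Ibragimov identity: for any differential function $F$ and any generator $X$ of the form \eqref{sg} with characteristic $W = \eta - \xi^j u_j$,
\[
\bigl(X^{(4)} + D_i(\xi^i)\bigr) F \;=\; W\,\frac{\delta F}{\delta u} \;+\; D_i \bigl(C^i[F]\bigr),
\]
where $C^i[F]$ is precisely the right-hand side of \eqref{cvf} with $F$ in place of $\mathcal{L}$. This identity is purely algebraic: inserting the characteristic decomposition $\eta^{(s)}_{i_1\dots i_s} = D_{i_1\dots i_s}W + \xi^j u_{i_1\dots i_s j}$ into \eqref{prolong} and then repeatedly integrating by parts to strip total derivatives off $W$, one collects the resulting terms by multi-index $(i_x,i_y,i_t)$ and recovers both the $\xi^i F$ summand and the multinomial weights in the weighted Euler--Lagrange operator as the unique combinatorial factors that balance the Leibniz distributions.

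With the identity in hand, I apply it with $F = \mathcal{L} = \upsilon\,\Delta$ after performing the substitution $\upsilon = \phi(x,y,t,u)$ furnished by nonlinear self-adjointness. Because $X$ is a Lie point symmetry, the linearized symmetry condition \eqref{lsc} yields $X^{(4)}\Delta \equiv 0$ modulo $\Delta$; combined with the factorisation $\mathcal{L} = \phi\cdot\Delta$, this gives $X^{(4)}\mathcal{L} \equiv 0$ and $D_i(\xi^i)\mathcal{L} \equiv 0$ on solutions of \eqref{gKS2}, so the entire left-hand side of the identity vanishes on-shell. At the same time, nonlinear self-adjointness renders $\delta\mathcal{L}/\delta u$ a multiple of $\Delta$, so the term $W\,\delta\mathcal{L}/\delta u$ also vanishes on solutions. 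The identity therefore collapses to $D_i C^i = 0$ on solutions of \eqref{gKS2}, with $C^i$ given exactly as in \eqref{cvf}.

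The main obstacle is the derivation of the explicit combinatorial form of $C^i[\cdot]$: tracking which iterated total derivative $D_x^{j_x}D_y^{j_y}D_t^{j_t}$ ultimately lands on which partial $\partial/\partial u_{(i_x+j_x)x\,(i_y+j_y)y\,(i_t+j_t)t}$ through the repeated integration by parts, and then recognising the resulting sum as the weighted Euler--Lagrange operator with the stated multinomial coefficients. For a fourth-order equation in three independent variables this bookkeeping is lengthy but entirely mechanical, and is an instance of the general construction detailed in \cite{Ibra2k7,Ibr2k11a,Ibr2k11b}; this is precisely the reason the authors rely on the SYM package for the explicit computations carried out in the subsequent sections.
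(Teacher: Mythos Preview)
The paper does not actually prove this theorem: its entire proof reads ``For a proof see \cite{Ibra2k7}.'' So there is nothing to compare against beyond the cited literature, and your sketch is in fact more substantive than what the paper itself offers.

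Your outline is the standard Ibragimov argument and is essentially correct: the algebraic Noether operator identity, applied to the formal Lagrangian, together with the symmetry condition and the nonlinear self-adjointness hypothesis, collapses to a divergence expression on solutions. One point worth tightening concerns the order of operations. In Ibragimov's original derivation the identity is applied to $\mathcal L = \upsilon\,\Delta$ with $\upsilon$ treated as an \emph{independent} dependent variable; the term $W\,\delta\mathcal L/\delta u$ is then exactly $W$ times the adjoint equation, and the substitution $\upsilon=\phi$ is performed only at the end, so that nonlinear self-adjointness makes $\delta\mathcal L/\delta u\big|_{\upsilon=\phi}=\lambda\Delta$ vanish on solutions. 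You instead substitute $\upsilon=\phi(x,y,t,u)$ first and then apply the identity to $F=\phi\,\Delta$; when $\phi$ genuinely depends on $u$, the Euler--Lagrange derivative $\delta(\phi\Delta)/\delta u$ picks up extra terms from differentiating $\phi$, and it is not immediate that this coincides with $\lambda\Delta$. In the present paper this is harmless because the classification in Theorem~\ref{thm:2} forces $\phi_u=0$ in every case, so the two routes agree; but for the general statement as written you should either follow Ibragimov's order (identity first, substitute last) or add a line explaining why the extra $\phi_u$-contributions cancel.
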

\begin{proof}
	For a proof see \cite{Ibra2k7}.
\end{proof}

\begin{rem}
	From a conserved vector $(C^x,C^y,C^t)$ the conservation law has the form
	$$
		D_x(C^x)+D_y(C^y)+D_t(C^t)=0
	$$
	satisfied on the solutions of equation \eqref{gKS2}.
\end{rem}

\begin{defn}
	A conserved vector is called \textit{trivial} if,
	\begin{itemize}
		\item its divergence is identically zero and,
		\item the components of the vector vanish on the solutions of equation  \eqref{gKS2}.
	\end{itemize}
\end{defn}

\begin{rem}
	Only the nontrivial conserved vectors will be considered.
\end{rem}

\section{The self-adjointness classification}\label{sec:sa}

As it can be seen by the definitions of self-adjointness  the first step is to obtain the adjoint equation. For equation  \eqref{gKS2} the adjoint equation is
\begin{equation}\label{adjoint}
	\begin{split}
		&2 (h(u)-g') u_{y} \upsilon _{y}-g(u) \upsilon _{yy}- r(u) \upsilon _{xx}+(1-2 r') u_{x} \upsilon _{x}\\
		&\quad-\upsilon \left(f'+(g^{\prime \prime }-h' u_{y}) u_{y}{}^2+2 (g' - h(u))u_{yy}+r^{\prime \prime } u_{x}{}^2+(2 r' -1)u_{xx}\right)\\
		&\qquad +\upsilon _{yyyy}+2 \upsilon _{xxyy}+\upsilon _{xxxx}-\upsilon _{t}=0
	\end{split}
\end{equation}
where $\upsilon=\upsilon(x,y,t)$ is the new dependent variable.

\begin{thm}
	Equation \eqref{gKS2} has no strictly self-adjoint subcase.
\end{thm}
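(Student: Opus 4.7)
The plan is to extract an obstruction that sits in coefficients of $\Delta$ independent of the free functions $f,g,h,r$, so that no case analysis is needed. Strict self-adjointness asks for a coefficient $\lambda$ with
$$
\left.\frac{\delta\mathcal L}{\delta u}\right|_{\upsilon=u}=\lambda\,\Delta(x,y,t,u,u_x,\dots,u_{yyyy}),
$$
understood as an identity in the jet variables. My strategy is to fix $\lambda$ by reading off the coefficients of two particular jet variables whose occurrence in $\Delta$ is purely numerical, and to verify that these two readings give incompatible values of $\lambda$.

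The two jet variables I will use are $u_{xxxx}$ and $u_t$; in $\Delta$ each appears exactly once, with coefficient $-1$, and with no dependence on $f,g,h,r$. After substituting $\upsilon=u$ in \eqref{adjoint} I need the corresponding coefficients on the adjoint side. A quick inspection of \eqref{adjoint} shows that every term other than $\upsilon_{yyyy}+2\upsilon_{xxyy}+\upsilon_{xxxx}-\upsilon_t$ is of order at most two in derivatives of $\upsilon$ and contains no $\upsilon_t$, so under $\upsilon=u$ those terms cannot contribute anything to the coefficients of $u_{xxxx}$ or $u_t$. Hence the coefficient of $u_{xxxx}$ in the adjoint becomes $+1$ and the coefficient of $u_t$ becomes $-1$. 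Matching with $\lambda\Delta$ then gives $+1=-\lambda$ from $u_{xxxx}$, forcing $\lambda=-1$, while $-1=-\lambda$ from $u_t$ forces $\lambda=+1$.

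These two requirements on $\lambda$ contradict each other, and, crucially, the derivation above uses no property of $f,g,h,r$ whatsoever, so no subcase of \eqref{gKS2} can satisfy strict self-adjointness. The only step that requires any care is the verification that no other term of \eqref{adjoint} contaminates the coefficients of $u_{xxxx}$ or $u_t$ under $\upsilon=u$; this is essentially an order-counting check, and it reflects the structural reason for the obstruction: the biharmonic part $\partial_x^4+2\partial_x^2\partial_y^2+\partial_y^4$ is formally self-adjoint (its sign is preserved under adjunction), whereas the evolutionary operator $\partial_t$ is formally anti-self-adjoint (its sign flips), so the two families of terms impose opposite values of $\lambda$ that cannot be reconciled.
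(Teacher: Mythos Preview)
Your argument is correct. You read off the coefficients of $u_{xxxx}$ and $u_t$ in $\left.\dfrac{\delta\mathcal L}{\delta u}\right|_{\upsilon=u}$ and in $\lambda\Delta$ and obtain the incompatible constraints $\lambda=-1$ and $\lambda=+1$, independently of $f,g,h,r$. One small point worth making explicit: the comparison presupposes that $\lambda$ does not itself depend on $u_{xxxx}$ or $u_t$. This is immediate, since the left-hand side is affine in each of these jet coordinates while any such dependence in $\lambda$ would make $\lambda\Delta$ genuinely nonlinear in them; but stating it removes any doubt about the ``identity in jet variables'' reading.

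The paper proceeds slightly differently: it substitutes $\upsilon=u$ into the adjoint and then eliminates $u_t$ via the equation $\Delta=0$, obtaining a residual expression that must vanish identically. In that residual the fourth-order block survives as $2u_{yyyy}+4u_{xxyy}+2u_{xxxx}$, which no choice of $f,g,h,r$ can kill. This is exactly your obstruction in disguise: eliminating $u_t$ amounts to fixing $\lambda=1$ from the $u_t$ coefficient, after which the $u_{xxxx}$ coefficient becomes $1-\lambda(-1)=2$. Your route is more economical (no need to write out the full residual) and more transparent about \emph{why} the obstruction is structural, via your closing remark that the biharmonic operator is formally self-adjoint while $\partial_t$ is formally anti-self-adjoint. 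The paper's route, on the other hand, produces the full reduced expression, which is what one would need anyway to continue to the quasi and nonlinear self-adjoint classifications.
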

\begin{proof}
	 By making the substitution $\upsilon=u$ in the adjoint equation \eqref{adjoint}  and then  substituting $u_t$ from equation \eqref{gKS2},  we get
	 \begin{equation*}
	 	\begin{split}
	 		&\left(h(u)-2 g'\right) u_{y}{}^2-2 g(u) u_{yy}+2 u_{yyyy}+\frac{1}{2}\left(1-4 r'\right) u_{x}{}^2-2 r(u) u_{xx}\\
			&\quad -u \left(f'+\left(g^{\prime \prime -h'}\right) u_{y}{}^2+2 \left(g'-h(u)\right) u_{yy}+r^{\prime \prime } u_{x}{}^2+\left(-1+2 r'\right) u_{xx}\right)\\
			&\qquad +4 u_{xxyy}+2 u_{xxxx}-f(u)\equiv0.
	 	\end{split}
	 \end{equation*}
	 It is obvious that there is no choice of the functions $f,h,g,r$ satisfying  the above condition. In other words there is no choice of the functions $f,h,g,r$ that will make equation  \eqref{gKS2} strictly self-adjoint.
\end{proof}

\begin{thm}\label{thm:1}
	Equation \eqref{gKS2} is quasi self-adjoint when $f=c_1, r=u/2+c_2, h=g^\prime$.
\end{thm}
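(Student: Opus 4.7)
The plan is to substitute $\upsilon=\phi(u)$, with $\phi$ not identically constant, into the adjoint equation \eqref{adjoint} and impose that the result be a multiple $\lambda\,\Delta$ of the equation, so that the two become equivalent. First I would carry out the chain-rule expansion of every $\upsilon$-derivative that appears in \eqref{adjoint}: at first order $\upsilon_t=\phi'u_t$, $\upsilon_x=\phi'u_x$, $\upsilon_y=\phi'u_y$; at second order $\upsilon_{xx}=\phi''u_x^2+\phi'u_{xx}$ and $\upsilon_{yy}=\phi''u_y^2+\phi'u_{yy}$; and at fourth order the three quantities $\upsilon_{xxxx}$, $\upsilon_{xxyy}$, $\upsilon_{yyyy}$, each of which expands as a polynomial in $\phi',\phi'',\phi''',\phi^{(4)}$ and the partial derivatives of $u$ up to order four.

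After this substitution the LHS of \eqref{adjoint} becomes a polynomial in the independent jet coordinates, and equating it to $\lambda\,\Delta$ produces a system of determining equations by matching coefficients of each distinct monomial. Because $\Delta$ contains only $u_x^2,u_y^2,u_{xx},u_{yy},u_{xxxx},u_{xxyy},u_{yyyy},u_t$ and the free term $f(u)$, every other monomial present in the substituted adjoint must have zero coefficient.

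The crucial observation is that the spurious monomials that appear --- $u_{xx}^2$, $u_{yy}^2$, $u_{xy}^2$, $u_x u_{xxx}$, $u_y u_{yyy}$, $u_x u_{xyy}$, $u_y u_{xxy}$, $u_{xx}u_{yy}$, together with the pure quartics $u_x^4, u_x^2 u_y^2, u_y^4$ and the cubic combinations $u_x^2 u_{xx}$, $u_y^2 u_{yy}$, etc. --- all come out of the fourth-order chain-rule expansion of $\phi(u)$ and carry a factor of $\phi''$ or a higher derivative of $\phi$. Their vanishing therefore forces $\phi''\equiv 0$, whence $\phi(u)=c_3 u+c_4$ is affine. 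With $\phi$ affine, \eqref{adjoint} becomes linear in the derivatives of $u$, and matching against $\lambda\,\Delta$ reduces to three algebraic conditions on the constitutive functions: the coefficient $(1-2r')$ of $u_x\upsilon_x$ must vanish, giving $r(u)=u/2+c_2$; the coefficient $2(h-g')$ of $u_y\upsilon_y$ must vanish, giving $h=g'$; and the standalone term $f'$ inside the $\upsilon(\cdots)$ factor must vanish, giving $f=c_1$. The function $g(u)$ remains unrestricted, in accordance with the statement.

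The main obstacle is the bookkeeping of the fourth-order chain-rule expansions, since $\upsilon_{xxxx}$, $\upsilon_{xxyy}$, $\upsilon_{yyyy}$ together generate a sizable list of independent monomials in the jet and one must confirm systematically that every spurious coefficient is killed by the constraint $\phi''=0$. In practice this is most reliably delegated to a computer algebra system such as SYM, which the authors have already mentioned using throughout the paper; once the spurious coefficients are secured to zero, the reduction of the residual determining equations to the three conditions on $f,r,h$ is short and transparent.
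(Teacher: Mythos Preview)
Your argument has a genuine gap at the step where you pass from $\phi''\equiv 0$ to the three conditions on $r,h,f$. With $\phi(u)=c_3u+c_4$ affine, the substituted adjoint carries the fourth–order block $+c_3\bigl(u_{xxxx}+2u_{xxyy}+u_{yyyy}\bigr)$ and the time term $-c_3\,u_t$, while $\lambda\Delta$ carries $-\lambda\bigl(u_{xxxx}+2u_{xxyy}+u_{yyyy}\bigr)$ and $-\lambda\,u_t$. Matching the $u_t$–coefficient forces $\lambda=c_3$; matching the $u_{xxxx}$–coefficient forces $\lambda=-c_3$. Hence $c_3=0$, so $\phi$ is a \emph{nonzero constant}, not merely affine. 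This is exactly what the paper finds: among the determining equations one obtains $\phi'=0$ directly, and the substitution is $\phi(u)=c\neq 0$. Your opening hypothesis ``$\phi$ not identically constant'' is therefore the opposite of what actually occurs.

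Once $\phi'=0$, the sources you cite for the three conditions disappear: the terms $u_x\upsilon_x$ and $u_y\upsilon_y$ vanish identically since $\upsilon_x=\upsilon_y=0$. The correct derivation is that the whole adjoint collapses to
\[
-c_4\Bigl(f'+(g''-h')u_y^{2}+2(g'-h)u_{yy}+r''u_x^{2}+(2r'-1)u_{xx}\Bigr)=\lambda\Delta,
\]
and since the left side contains neither $u_t$ nor any fourth–order derivative, $\lambda=0$ and each bracketed coefficient must vanish separately. That yields $f'=0$, $h=g'$, $2r'-1=0$ (and the redundant $r''=0$, $h'=g''$), i.e.\ $f=c_1$, $r=u/2+c_2$, $h=g'$. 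So your endpoint is right, but the mechanism is different from the one you describe; in a write–up you need to include the sign comparison on the biharmonic and $u_t$ terms that forces $\phi'=0$, and then read the constraints off the factor multiplying $\upsilon$ rather than off $\upsilon_x,\upsilon_y$.
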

\begin{proof}
 	After making the substitution $\upsilon=\phi(u)$ in the adjoint equation \eqref{adjoint}  and then  substituting  $u_t$ from equation \eqref{gKS2},  we get
	 \begin{equation*}
	 	\begin{split}
			& \left(\left(h(u)-2 g'\right) \phi '+\phi (u) \left(h'-g^{\prime \prime }\right)-g(u) \phi ^{\prime \prime }+6 \phi ^{\prime \prime \prime } u_{yy}+2 \phi ^{\prime \prime \prime } u_{xx}\right)u_{y}{}^2\\
			&\quad+\left(\phi (u) \left(1-2 r'\right)-2 r(u) \phi '+2 \phi ^{\prime \prime } u_{yy}\right) u_{xx}+4 \phi ' u_{xxyy}+2 \phi ^{\prime \prime \prime \prime } u_{y}{}^2u_{x}{}^2\\
			& \quad\,+\left(\frac{1}{2} \left(\left(1-4 r'\right) \phi '-2 \left(\phi (u) r^{\prime \prime }+r(u) \phi ^{\prime \prime }\right)\right)+2 \phi ^{\prime \prime \prime }( u_{yy}+3 u_{xx})\right)u_{x}{}^2\\			
			&\quad\,\,+3 \phi ^{\prime \prime } u_{yy}{}^2+2 \phi ' u_{yyyy}+\phi ^{\prime \prime \prime \prime } u_{x}{}^4+4 \phi ^{\prime \prime } u_{xy}{}^2+3 \phi ^{\prime \prime } u_{xx}{}^2+2 \phi ' u_{xxxx}\\
			&\quad\,\,\,-(\phi (u) f'+f(u) \phi ')+\phi ^{\prime \prime \prime \prime } u_{y}{}^4+\left(2 \phi (u) \left(h(u)-g'\right)-2 g(u) \phi '\right) u_{yy}\\
			&\quad\,\,\,\,+4 \phi ^{\prime \prime }\left( u_{yyy}+ u_{xxy}\right)u_{y} + 4\left(2 \phi ^{\prime \prime \prime } u_{y} u_{xy}+ \phi ^{\prime \prime } u_{xyy}+ \phi ^{\prime \prime } u_{xxx}\right)u_{x}\equiv0.
	 	\end{split}
	 \end{equation*}
	 This condition must vanish for every solution $u$ of equation \eqref{gKS2}. Hence we arrive at the system:
	$$
 		\phi (u) f'+f(u) \phi '=0,
	$$
	$$
		\phi (u) \left(1-2 r'\right)-2 r(u) \phi '=0,
	$$
	$$
		\h(u) \phi (u)=\phi (u) g'+g(u) \phi ',
	$$
	 $$
	 	\phi '=0,\,\phi ^{\prime \prime }=0,\,\phi ^{\prime \prime \prime }=0,\,\phi ^{\prime \prime \prime \prime }=0,
	$$	
	$$	
 		\left(1-4 r'\right) \phi '-2 \left(\phi (u) r^{\prime \prime }+r(u) \phi ^{\prime \prime }\right)=0,
	$$
	$$
 		h(u) \phi '-2 g' \phi '+\phi (u) \left(h'-g^{\prime \prime }\right)-g(u) \phi ^{\prime \prime }=0.
	$$	
	Solving the above system we get that $\phi(u)=c\ne0$ and as stated, $f=c_1, r=u/2+c_2, h=g^\prime$.
\end{proof}

\begin{thm}\label{thm:2}
	Equation \eqref{gKS2} is nonlinear self-adjoint if and only if:
	\begin{enumerate}
		\item $r=u/2+\alpha, g=\beta u+\gamma, h=\beta, f=\delta u^2+\epsilon u +\zeta,$
		\item $r=u/2+\alpha,  h=g^\prime, f=\beta u^2+\gamma u +\delta + c \int g(u)\,du,\ \beta,c,g^{\prime\prime}\ne0,$
		\item $r=u/2+\alpha,  h=g^\prime, f=\beta  u +\gamma + c \int g(u)\,du,\ c,g^{\prime\prime}\ne0,$
		\item $r=u/2+\alpha,  h=g^\prime, f=\beta u^2+\gamma u +\delta ,\ \beta,g^{\prime\prime}\ne0,$
		\item $r=u/2+\alpha,  h=g^\prime, f=\beta  u +\gamma,\ g^{\prime\prime}\ne0,$
	\end{enumerate}
\end{thm}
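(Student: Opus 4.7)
The plan is to mirror the approach of Theorem \ref{thm:1}, but with the more general ansatz $\upsilon=\phi(x,y,t,u)$. First I would substitute this ansatz into the adjoint equation \eqref{adjoint}, expanding every derivative of $\upsilon$ by the chain rule, and then eliminate $u_t$ via \eqref{gKS2}. The resulting expression is a polynomial in the remaining independent jet variables $u_x,u_y,u_{xx},u_{xy},u_{yy},u_{xxx},\ldots,u_{yyyy}$; since these are algebraically independent of each other and of $(x,y,t,u)$, the nonlinear self-adjointness condition is equivalent to the vanishing of every monomial coefficient, producing a large overdetermined linear system on $\phi$ coupled to the unknown functions $f,g,h,r$.

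Next I would extract the structural consequences of that system. Comparing the highest-order derivative terms, which arise from $\upsilon_{xxxx}+2\upsilon_{xxyy}+\upsilon_{yyyy}$ combined with the $-\phi_u u_t$ produced by the $u_t$-elimination, immediately forces $\phi_u=0$, so that $\phi=\phi(x,y,t)$. The coefficient of $u_{xx}$, which reads $\phi(1-2r')$, then gives $r(u)=u/2+\alpha$ exactly as in Theorem \ref{thm:1}. The coefficients of the monomials $u_y$, $u_{yy}$, $u_y^2$ produce relations of the type $(h-g')\phi_y=0$ and $(h-g')\phi=0$ which, in every branch that leaves a non-trivial substitution $\phi$ available, force $h=g'$. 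What is left of the polynomial, after these reductions and after separating the $u$-dependence from the $(x,y,t)$-dependence, reduces to a relation of the schematic form
$$
\phi\bigl(f'(u)-c\,g(u)-2\beta u-\gamma\bigr)=A(x,y,t),
$$
where $A$ gathers pure $x,y,t$-derivatives of $\phi$ and $c,\beta,\gamma$ are separation constants. This splits into an ODE for $f$ in terms of $g$ together with a residual linear PDE system on $\phi(x,y,t)$.

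Finally I would run the case analysis on the residual ODE. If $g''(u)=0$, the $u_y^2$-coefficient forces $h=\beta$ constant and $g=\beta u+\gamma$ affine, and the ODE integrates to a quadratic $f=\delta u^2+\epsilon u+\zeta$; this is Case 1. If $g''(u)\ne 0$, then $h=g'$ is rigid and the ODE for $f$ integrates to $f=\beta u^2+\gamma u+\delta+c\int g(u)\,du$; the four subcases 2--5 then arise from the dichotomies $\beta=0$ vs.\ $\beta\ne0$ and $c=0$ vs.\ $c\ne0$. For the converse direction, in each of the five cases an explicit non-zero $\phi(x,y,t)$ solving the residual linear system is exhibited by direct verification. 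The principal obstacle is purely combinatorial bookkeeping: the expanded polynomial involves some two dozen distinct monomials in the derivatives of $u$ whose coefficients must be zeroed simultaneously, so I would perform the substitution, the $u_t$-elimination, the coefficient extraction and the integration of the ODE for $f$ symbolically with the SYM package \cite{DiTs2k5a,DiTs2k6,Dimas2k8} already employed for the symmetry computations.
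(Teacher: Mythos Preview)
Your proposal is correct and follows essentially the same route as the paper: substitute $\upsilon=\phi(x,y,t,u)$ into the adjoint equation, eliminate $u_t$, split on the independent jet monomials to force $\phi_u=0$, $r=u/2+\alpha$, $h=g'$, and then analyze the residual condition (the paper writes it as \eqref{condition} and differentiates it twice in $u$, which is exactly the separation-of-variables argument you sketch schematically) to obtain the five cases according to whether $g''$ and the separation constants vanish. The only cosmetic difference is that the paper displays the full monomial-coefficient system explicitly before solving it, whereas you summarize which monomials yield which constraints.
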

\begin{proof}
	After making the substitution $\upsilon=\phi(x,y,t,u)$ in the adjoint equation \eqref{adjoint}  and then  substituting $u_t$ from equation \eqref{gKS2},  we get
	\begin{equation*}
	 	\begin{split}
			&-\phi (x,y,t,u) f'-f(u) \phi _{u}+2 u_{yyyy} \phi _{u}+4 u_{xxyy} \phi _{u}+2 u_{xxxx} \phi _{u}+3 u_{yy}{}^2 \phi _{uu}\\
			&+4 u_{xy}{}^2 \phi _{uu}+3 u_{xx}{}^2 \phi _{uu}+u_{y}{}^4 \phi _{uuuu}+u_{x}{}^4 \phi _{uuuu}-\phi _{t}+4 u_{yyy} \phi _{yu}+4 u_{xxy} \phi _{yu}\\
			&+4 u_{y}{}^3 \phi _{yuuu}-g(u) \phi _{yy}+\phi _{yyyy}+4 u_{xyy} \phi _{xu}+4 u_{xxx} \phi _{xu}+4 u_{x}{}^3 \phi _{xuuu}\\
			&+u_{xx} \left(\phi (x,y,t,u) \left(1-2 r'\right)-2 r(u) \phi _{u}+2 \phi _{yyu}+6 \phi _{xxu}\right)+2 \phi _{xxyy}+\phi _{xxxx}\\
			&+u_{yy} \left(2 u_{xx} \phi _{uu}+2 u_{x}{}^2 \phi _{uuu}+4 u_{x} \phi _{xuu}+2 \left(\phi (x,y,t,u) \left(h(u)-g'\right)\right.\right.\\
			&\quad-\left.\left.g(u) \phi _{u}+3 \phi _{yyu}+\phi _{xxu}\right)\right)+8 u_{xy} \phi _{xyu}-r(u) \phi _{xx}\\
			&+u_{y}{}^2 \left(\phi (x,y,t,u) \left(h'-g^{\prime \prime }\right)+\left(h(u)-2 g'\right) \phi _{u}-g(u) \phi _{uu}+6 u_{yy} \phi _{uuu}\right.\\
			&\quad+\left.2 u_{xx} \phi _{uuu}+2 u_{x}{}^2 \phi _{uuuu}+6 \phi _{yyuu}+4 u_{x} \phi _{xuuu}+2 \phi _{xxuu}\right)\\
			&+u_{x}{}^2 \left(-\phi (x,y,t,u) r^{\prime \prime }+\left(\frac{1}{2}-2 r'\right) \phi _{u}-r(u) \phi _{uu}+6 u_{xx} \phi _{uuu}+2 \phi _{yyuu}\right.\\
			&\quad+\left.6 \phi _{xxuu}\right)\\
			&+u_{y} \left(4 u_{yyy} \phi _{uu}+4 u_{xxy} \phi _{uu}+2 \left(h(u)-g'\right) \phi _{y}-2 g(u) \phi _{yu}+12 u_{yy} \phi _{yuu}\right.\\
			&\quad+4 u_{xx} \phi _{yuu}+4 u_{x}{}^2 \phi _{yuuu}+8 u_{xy} \phi _{xuu}+u_{x} \left(8 u_{xy} \phi _{uuu}+8 \phi _{xyuu}\right)\\
			&\qquad+\left.4 \left(\phi _{yyyu}+\phi _{xxyu}\right)\right)\\
			&+u_{x} \left(4 u_{xyy} \phi _{uu}+4 u_{xxx} \phi _{uu}+8 u_{xy} \phi _{yuu}+\left(1-2 r'\right) \phi _{x}-2 r(u) \phi _{xu}\right.\\
			&\quad+\left.12 u_{xx} \phi _{xuu}+4 \left(\phi _{xyyu}+\phi _{xxxu}\right)\right)\equiv0.
	 	\end{split}
	 \end{equation*}
	  This condition must vanish for every solution $u$ of equation \eqref{gKS2}. Hence we attain the system:
	$$
		\left(1-2 r'\right) \phi _{x}-2 r(u) \phi _{xu}+4 \left(\phi _{xyyu}+\phi _{xxxu}\right)=0 ,
	$$
	$$
		2 (h(u)- g' )\phi _{y}-2 g(u) \phi _{yu}+4 \phi _{yyyu}+4 \phi _{xxyu}=0 ,
	$$
	$$
		(h(u)- g' ) \phi (x,y,t,u)+3 \phi _{yyu}+\phi _{xxu}-g(u) \phi _{u}=0 ,
	$$
	$$
		 \left(1-2 r'\right)\phi (x,y,t,u)-2 r(u) \phi _{u}+2 \phi _{yyu}+6 \phi _{xxu}=0 ,
	$$
	$$
		\phi _{yuu}=0 ,\,	\phi _{uuu}=0,\,	\phi _{xuuu}=0 ,\,\phi _{yuuu}=0 ,\,\phi _{uuuu}=0 ,
	$$
	$$
	   	\phi _{u}=0 ,\,\phi _{xu}=0 ,\,\phi _{yu}=0 ,\,\phi _{uu}=0 ,\,\phi _{xuu}=0 ,\,\phi _{xyu}=0 ,
	$$	
	$$		
		\left(1-4 r'\right) \phi _{u}-2 r(u) \phi _{uu}-2 r^{\prime \prime } \phi (x,y,t,u)+4 \phi _{yyuu}+12 \phi _{xxuu}=0 ,
	$$
	$$		
		f'\phi +f(u) \phi _{u}+\phi _{t}+g(u) \phi _{yy}-\phi _{yyyy}+r(u) \phi _{xx}-2 \phi _{xxyy}-\phi _{xxxx}=0,
	$$
	$$
 		 \left(h'-g^{\prime \prime }\right)\phi (x,y,t,u)+h(u) \phi _{u}-2 g' \phi _{u}-g(u) \phi _{uu}+6 \phi _{yyuu}+2 \phi _{xxuu}=0.
	$$
	Looking at the above system it is obvious that 
	\begin{equation}\label{firstsol}
		\phi=\phi(x,y,t),\ r(u)=u/2+c,\ h=g^\prime.
	\end{equation}
	By substituting \eqref{firstsol} in the system, we arrive to the following equation:
	\begin{equation}\label{condition}
		\phi f'+\phi _{, t}+g(u) \phi _{, yy}-\phi _{, yyyy}+\frac{1}{2} \left(u+2 \boldsymbol{c}\right) \phi _{, xx}-2 \phi _{, xxyy}-\phi _{, xxxx}=0.
	\end{equation}
	By differentiating equation \eqref{condition} two times with respect to $u$ we have in addition
	\begin{align*}
		&\phi f^{\prime \prime \prime }+g^{\prime \prime } \phi _{, yy}=0,\\
		&\phi f^{\prime \prime }+g' \phi _{, yy}+\frac{\phi _{, xx}}{2}=0.
	\end{align*}
	By using these two equations in combination with equation \eqref{condition} we arrive at the five cases stated.
\end{proof}
Therefore having  obtained the specific self-adjoint subcases of equation \eqref{gKS2} it  remains  to know their symmetries in order to  construct the corresponding conserved vectors. In the next Section the complete group classification for equation \eqref{gKS2}  is given.

\section{The Group Classification}

From the invariant surface condition \eqref{lsc} for equation  \eqref{gKS2} the system of the determining equations is obtained, see Appendix~\ref{app:det}. By solving the subsystem not containing any of the functions $f,g,h,r$ we obtain:  
\begin{align*}
	\xi^1(x,y,t,u) &= \mathcal{F}_{11}(t)-y \mathcal{F}_{13}(t)+\frac{x }{4}\mathcal{F}_1'(t),\\
	\xi^2(x,y,t,u)  &=\mathcal{F}_{12}(t)+x \mathcal{F}_{13}(t)+\frac{y }{4}\mathcal{F}_1'(t),\\
	\xi^3(x,y,t,u) &=\mathcal{F}_1(t),\\
	\eta(x,y,t,u) &= \mathcal{F}_{15}(y,t)-\frac{u }{2}\mathcal{F}_1'(t)-\frac{1}{8} x \left(8 \mathcal{F}_{11}'(t)-8 y \mathcal{F}_{13}'(t)+x \mathcal{F}_1{}^{\prime \prime }(t)\right),
\end{align*}
and the remaining determining equations are:
$$
 	r' \mathcal{F}_1{}^{\prime \prime } =0,
$$
$$
 	h' \mathcal{F}_1{}^{\prime \prime } =0,
$$
$$
 	g' \mathcal{F}_1{}^{\prime \prime } =0,
$$
$$
 	r' \mathcal{F}_{13}' =0,
$$
$$
 	h' \mathcal{F}_{13}' =0,
$$
$$
 	g' \mathcal{F}_{13}' =0,
$$
$$
 	r' \mathcal{F}_{11}' =0,
$$
$$
 	h' \mathcal{F}_{11}' =0,
$$
$$
 	g' \mathcal{F}_{11}' =0,
$$
$$
 	f' \mathcal{F}_1{}^{\prime \prime }-\mathcal{F}_1{}^{\prime \prime \prime } =0,
$$
$$
 	f' \mathcal{F}_{11}'-\mathcal{F}_{11}{}^{\prime \prime } =0,
$$
$$
 	f' \mathcal{F}_{13}'-\mathcal{F}_{13}{}^{\prime \prime } =0,
$$
$$
 	(1+2 h(u)) \mathcal{F}_{13}' =0,
$$
$$
 	(2 h(u)-1) \mathcal{F}_{13}=0,
$$
$$
 	h' \left(2 \mathcal{F}_{15}-u \mathcal{F}_1'\right) =0,
$$
$$
 	(g(u)-r(u)) \mathcal{F}_{13} =0,
$$
$$
 	4 \mathcal{F}_{12}'+y \mathcal{F}_1{}^{\prime \prime }+8 h(u) {\mathcal{F}_{15}}_{y} =0,
$$
$$
 	8 \mathcal{F}_{15} r'+4 \left(r(u)-u r'\right) \mathcal{F}_1' =0,
$$
$$
 	8 \mathcal{F}_{15} g'+4 \left(g(u)-u g'\right) \mathcal{F}_1' =0,
$$
\begin{equation*}
	\begin{split}
 		&4 \mathcal{F}_{15} f'+6 f(u) \mathcal{F}_1'-2 u f' \mathcal{F}_1'+2 u \mathcal{F}_1{}^{\prime \prime }-r(u) \mathcal{F}_1{}^{\prime \prime }-4 {\mathcal{F}_{15}}_{t}\\
		&\quad+4 g(u) {\mathcal{F}_{15}}_{yy}-4 {\mathcal{F}_{15}}_{yyyy} =0.
	\end{split}
\end{equation*}

The resulted group classification is summarized in Table \ref{tab:sym}. The first row gives the symmetries that occur for every possible choice of the functions $f,g,h$ and $r$. In each subsequent row a special case appears along with the additional symmetries it has.

{\renewcommand*{\thefootnote}{\fnsymbol{footnote}}
\renewcommand*{\thempfootnote}{\fnsymbol{mpfootnote}}
\begin{sidewaystable}[htdp]
\begin{center}
\begin{tabular}{|c|c|c|c|c|} \hline
 $f$								&  $g$ 					& $h$ 						& $r$									& \textit{Symmetries}	\\\hline\hline
 $\forall$							& $\forall$					& $\forall$						& $\forall$									& $\mathfrak X_1=\partial _t,\ \mathfrak X_2=\partial _x,\ \mathfrak X_3=\partial _y$   \\ \hline
 $\forall$							& $r$					& $\frac{1}{2}$					&  $\forall$								& $\mathfrak X_4=x\partial _y-y\partial _x$ \\\hline
								&   						&							& 										& $\mathfrak X_4=e^{\alpha t }\partial _u,\ \mathfrak X_5=x\partial _y-y\partial _x,$ \\
 $\alpha u +\beta, \alpha\ne0$			& $r$ 					&  $\frac{1}{2}$					& $c$									& $\mathfrak X_6=\frac{e^{\alpha t }}{\alpha }\partial _y-e^{\alpha t } y\partial _u,$  \\
 								&  						& 							&										& $\mathfrak X_7=\frac{e^{\alpha t }}{\alpha }\partial _x-e^{\alpha t } x\partial _u $ \\\hline					 
								&						&							&										& $\mathfrak X_4=\partial _u,\ \mathfrak X_5 = t\partial _x-x\partial _u,$\\
$\beta$							& $r$	   				& $\frac{1}{2}$					& $c\ne0$									& $\mathfrak X_6=t\partial _y-y\partial _u,$ \\
								&						&							&										& $\mathfrak X_7=x\partial _y-y\partial _x$ \\\hline
								&						&							& 										& $\mathfrak X_4=\partial _u,\ \mathfrak X_5 = t\partial _x-x\partial _u,$ \\
$\beta$							& $r$	   			& $\frac{1}{2}$					& $0$									& $\mathfrak X_6=t\partial _y-y\partial _u,\ \mathfrak X_7=x\partial _y-y\partial _x,$ \\		
								&						&							& 										& $\mathfrak X_8= 2(3\beta t-u  )\partial _u+4 t\partial _t+x\partial _x+y\partial _y$\\\hline
\multirow{2}{*}{$\gamma r^3$}	& \multirow{2}{*}{$r$}		& \multirow{2}{*}{$\frac{1}{2}$}		& \multirow{2}{*}{$\alpha u+\beta,\alpha\ne0$}		&$\mathfrak X_4= x\partial _y-y\partial _x,$ \\
								&						&							&	 									& $\mathfrak X_5= 4\alpha t\partial _t+\alpha x\partial _x+\alpha y\partial _y-2(\alpha u+\beta)\partial _u$ \\\hline	
  $ \zeta (\alpha u+\beta)^3,\ \alpha\ne0$			& $\gamma(\alpha u+\beta)$	& $\frac{1}{2}$						& $\delta(\alpha u+\beta), \delta\ne\gamma$		& $\mathfrak X_4 = 4\alpha t\partial _t+\alpha x\partial _x+\alpha y\partial _y-2(\alpha u+\beta)\partial _u$ \\\hline
\multirow{2}{*}{ $\delta$}				& \multirow{2}{*}{$\alpha$	}	& \multirow{2}{*}{$\gamma\ne0$}	& \multirow{2}{*}{$\beta\ne\alpha$}				& $\mathfrak X_4= \partial _u,\ \mathfrak X_5=t\partial _x-x\partial _u,$\\
								&						&							&										& $\mathfrak X_6 = 2 t\partial _y-\frac{y}{\gamma}\partial _u$ \\\hline
\multirow{2}{*}{ $\gamma$}			& \multirow{2}{*}{$\alpha$	}	& \multirow{2}{*}{$0$}	& \multirow{2}{*}{$\beta\ne\alpha$}					& $\mathfrak X_4= t\partial _x-x\partial _u,$\\
								&						&							&										& $\mathfrak X_5 = \mathcal{F}(y,t)\partial _u,\ \mathcal{F}_{  t}-\alpha \mathcal{F}_{  yy}+\mathcal{F}_{  yyyy}=0$ \\\hline
$\zeta (\alpha u+\beta)^3\footnotemark[2]$	& $\gamma(\alpha u+\beta)\footnotemark[2]$	& $\epsilon\ne\frac{1}{2}$			& $\delta(\alpha u+\beta)\footnotemark[2]$					& $\mathfrak X_4=4\alpha t\partial _t+ \alpha x\partial _x+ \alpha y\partial _y-2(\alpha u+\beta)\partial _u$\\\hline	
\multirow{2}{*}{$\delta u+\epsilon,\ \delta\ne0$}		& \multirow{2}{*}{$\alpha\footnotemark[3]$}		& \multirow{2}{*}{$\gamma\ne0,\frac{1}{2}$}	& \multirow{2}{*}{$\beta\footnotemark[3]$}	& $\mathfrak X_4=e^{t \delta }\partial _u,\ \mathfrak X_5=\frac{e^{t \delta }}{\delta }\partial _x-e^{t \delta } x\partial _u$\\
								&						&							&										& $\mathfrak X_6=\frac{2 e^{t \delta }}{\delta }\partial _y-\frac{e^{t \delta } y}{\gamma }\partial _u$\\\hline
\multirow{2}{*}{$\gamma u+\delta,\ \gamma\ne0$}	& \multirow{2}{*}{$\alpha\footnotemark[3]$}		& \multirow{2}{*}{$0$}	& \multirow{2}{*}{$\beta\footnotemark[3]$}	& $\mathfrak X_4=\frac{e^{t \gamma }}{\gamma }\partial _x-e^{t \gamma } x\partial _u$\\
								&						&							&										& $\mathfrak X_5=\mathcal{F}(y,t)\partial _u ,\ \mathcal{F}_{  t}-\alpha \mathcal{F}_{  yy}-\gamma\mathcal{F}+\mathcal{F}_{  yyyy}=0$\\\hline	
$\delta$	& $\alpha\footnotemark[3]$		& $\gamma\ne0,\frac{1}{2}$	& $\beta\footnotemark[3]$	& $\mathfrak X_4=t\partial _x-x\partial _u,\ \mathfrak X_5=2 t\partial _y-\frac{y}{\gamma }\partial _u$\\\hline	
\multirow{2}{*}{$\gamma$}	& \multirow{2}{*}{$\alpha\footnotemark[3]$}		& \multirow{2}{*}{$0$}	& \multirow{2}{*}{$\beta\footnotemark[3]$}	& $\mathfrak X_4=t\partial _x-x\partial _u$\\
								&						&							&										& $\mathfrak X_5=\mathcal{F}(y,t)\partial _u ,\ \mathcal{F}_{  t}-\alpha \mathcal{F}_{  yy}+\mathcal{F}_{  yyyy}=0$\\\hline	
\multirow{2}{*}{$\beta u + \gamma,\ \beta\ne0$}	& \multirow{2}{*}{$0$}		& \multirow{2}{*}{$\alpha\ne0,\frac{1}{2}$}	& \multirow{2}{*}{$0$}	& $\mathfrak X_4=e^{t \alpha }\partial _u,\ \mathfrak X_5=\frac{e^{t \beta }}{\beta }\partial _x-e^{t \beta } x\partial _u$\\
								&						&							&										& $\mathfrak X_6=\frac{2 e^{t \beta }}{\beta }\partial _y-\frac{e^{t \beta } y}{\alpha }\partial _u$\\\hline	
\multirow{2}{*}{$\alpha u + \beta,\ \alpha\ne0$}	& \multirow{2}{*}{$0$}		& \multirow{2}{*}{$0$}	& \multirow{2}{*}{$0$}	& $\mathfrak X_4= \frac{e^{t \alpha }}{\alpha }\partial _x-e^{t \alpha } x\partial _u$\\
								&						&							&										& $\mathfrak X_5=\mathcal{F}(y,t)\partial _u,\ \mathcal{F}_{  t}-\alpha \mathcal{F}+\mathcal{F}_{  yyyy}=0$\\\hline	
\multirow{3}{*}{$\beta$}	& \multirow{3}{*}{$0$}		& \multirow{3}{*}{$\alpha\ne0,\frac{1}{2}$}	& \multirow{3}{*}{$0$}	& $\mathfrak X_4=\partial _u,\ \mathfrak X_5=t\partial _x-x\partial _u$\\
								&						&							&										& $\mathfrak X_6=2 t\partial _y-\frac{y}{\alpha }\partial _u,$\\						
								&						&							&										& $ \mathfrak X_7=4 t\partial _t+x\partial _x+y\partial _y-2(u-3 \beta t)\partial _u$\\\hline		
\multirow{2}{*}{$\alpha$}	& \multirow{2}{*}{$0$}		& \multirow{2}{*}{$0$}	& \multirow{2}{*}{$0$}	& $\mathfrak X_4=t\partial _x-x\partial _u,\ \mathfrak X_5=4 t\partial _t+x\partial _x+y\partial _y-2 (u-3\beta t)\partial _u$\\
								&						&							&										& $\mathfrak X_6=\mathcal{F}(y,t)\partial _u,\ \mathcal{F}_{  t}-\alpha. \mathcal{F}+\mathcal F_{yyyy}=0$\\	\hline					
\end{tabular}\footnotetext[2]{$\alpha(\gamma^2+\delta^2+\zeta^2)\ne0$}\footnotetext[3]{$\alpha^2+\beta^2\ne0$}
\caption{Complete group classification for equation \eqref{gKS2}}
\label{tab:sym}
\end{center}
\end{sidewaystable}%
}


\section{Conservation Laws}

By consulting table \ref{tab:sym} one can observe that the only point symmetries admitted by the self-adjoint classification given in Section \ref{sec:sa}  are  $\mathfrak X_1=\partial _t,\ \mathfrak X_2=\partial _x,\ \mathfrak X_3=\partial _y$. The only exception is the nonlinear self-adjoint case,
$$
	u_{  t}=\frac{1}{2}u_x^2+\frac{1}{2} u_y^2+\left(\frac{u}{2}+\alpha\right)(u_{xx}+ u_{  yy})-u_{  xxxx}-2 u_{  xxyy}-u_{  yyyy}+ \delta u^2+\epsilon u+\zeta
$$
which also admits the symmetry $\mathfrak X_4=x\partial _y-y\partial _x$. The nontrivial conserved vectors for each case follow.

\begin{case}[Quasi self-adjoint]
According to Theorem \ref{thm:1} the quasi self-adjoint case is 
 $$
 	u_{t}=\alpha+g' u_{y}{}^2+g(u) u_{yy}-u_{yyyy}+\frac{u_{x}{}^2}{2}+\left(\beta+\frac{1}{2} u\right) u_{xx}-2 u_{xxyy}-u_{xxxx},
$$
with formal Lagrangian $\mathcal L = c \Delta(x,y,t,u,u_x,\dots,u_{yyyy})$. Using the formula~\eqref{cvf}, for each one of the above mentioned three point symmetries,  three trivial conserved vectors are obtained.

\end{case}
\begin{case}[Nonlinear self-adjoint]
Following Theorem~\ref{thm:2}, to each one of the five possible nonlinear self-adjoint cases formula~\eqref{cvf} is applied using each of its admitted point symmetries. The conserved vector is given in the form $(C^1,C^2,C^3)$  yielding the conservation law $D_xC^1+D_yC^2+D_tC^3=0$ for every solution of the case of equation~\eqref{gKS2} studied.
	\begin{subcase}[$r=u/2+\alpha, g=\beta u+\gamma, h=\beta, f=\delta u^2+\epsilon u +\zeta$]
		For this case equation~\eqref{gKS2} takes the form:
		 \begin{multline}\label{sc21}
		 	u_{t}=\zeta+\epsilon u+\delta u^2+\beta u_{y}{}^2+\left(\gamma+\beta u\right) u_{yy}-u_{yyyy}+\frac{u_{x}{}^2}{2}\\
			+\left(\alpha+\frac{u}{2} \right) u_{xx}-2 u_{xxyy}-u_{xxxx}.
		\end{multline}
		The new dependent variable $\upsilon=\mathcal{F}_1(x,y,t)$, where $\mathcal{F}_1(x,y,t)$ is  any solution of the system 
		\begin{gather*}
			2 \delta \mathcal{F}_1+\beta {\mathcal{F}_1}_{yy}+\frac{{\mathcal{F}_1}_{xx}}{2}=0,\\
		\epsilon \mathcal{F}_1+{\mathcal{F}_1}_{t}+\gamma {\mathcal{F}_1}_{yy}-{\mathcal{F}_1}_{yyyy}+\alpha {\mathcal{F}_1}_{xx}-2 {\mathcal{F}_1}_{xxyy}-{\mathcal{F}_1}_{xxxx}=0.
		\end{gather*}
		\begin{pt}
			\begin{align*}
				C^1 &= \frac{1}{12} \left(4 (1-6 \beta ) {\mathcal{F}_1}_{yyt} u_{x}+3 u^2 {\mathcal{F}_1}_{xt}-8 {\mathcal{F}_1}_{yt} u_{xy}+8 u_{y} {\mathcal{F}_1}_{xyt}\right.\\
				&\left.\qquad\quad-6 {\mathcal{F}_1}_{t} \left((2 \alpha +8 \delta +u) u_{x}-2 \left(u_{xyy}+u_{xxx}\right)\right)-4 {\mathcal{F}_1}_{xt} \left(u_{yy}+3 u_{xx}\right)\right)\\
				&\qquad\quad+ u \left((\alpha +4 \delta ) {\mathcal{F}_1}_{xt}-{\mathcal{F}_1}_{xyyt}\right)+2 \beta  u {\mathcal{F}_1}_{xyyt},\\
				C^2 &= \frac{1}{6} \left(3 \beta  u^2 {\mathcal{F}_1}_{yt}+{\mathcal{F}_1}_{t} \left(-2 (3 \gamma +4 \delta +3 \beta  u) u_{y}+6 \left(u_{yyy}+u_{xxy}\right)\right)\right.\\
				&\qquad\left.-2 \left((-3+2 \beta ) u_{y} {\mathcal{F}_1}_{yyt}+2 {\mathcal{F}_1}_{xt} u_{xy}-2 u_{x} {\mathcal{F}_1}_{xyt}+{\mathcal{F}_1}_{yt} \left(3 u_{yy}+u_{xx}\right)\right)\right)\\
				&\qquad+ u \left((\gamma +4 \delta ) {\mathcal{F}_1}_{yt}+(-1+2 \beta ) {\mathcal{F}_1}_{yyyt}\right),\\
				C^3 &= -\zeta  \mathcal{F}_1(x,y,t)+u {\mathcal{F}_1}_{ t}.
			\end{align*}		
		\end{pt}	
		This conserved vector will be nontrivial if and only if ${\mathcal{F}_1}_{ t}\ne0$.
		\begin{px}
			\begin{align*}
				C^1 &= \frac{1}{6} \left(3 u_{, x} \left(2 (1-2 \beta ) {\mathcal{F}_1}_{, xyy}-(2 \alpha +8 \delta +u) {\mathcal{F}_1}_{, x}\right)-2 {\mathcal{F}_1}_{, y} u_{, xxy}\right.\\
				&\left.+2 \left((6 \beta-1 ) {\mathcal{F}_1}_{, yy} u_{, xx}+{\mathcal{F}_1}_{, x} \left(u_{, xyy}+3 u_{, xxx}\right)-2 u_{, xy} {\mathcal{F}_1}_{, xy}\right)\right)- {\mathcal{F}_1}_{, y} u_{, yyy},\\
				&- \mathcal{F}_1(x,y,t) \left(\zeta -u_{, t}+\beta  u_{, y}{}^2+\gamma  u_{, yy}+u \left(\epsilon +\delta  u+\beta  u_{, yy}\right)-4 \delta  u_{, xx}\right)\\
				C^2 &= \frac{1}{3} \left((-3+2 \beta ) {\mathcal{F}_1}_{yy} u_{xy}+{\mathcal{F}_1}(x,y,t)\left(3 \beta  u_{y} u_{x}+(3 \gamma +4 \delta +3 \beta  u) u_{xy}\right)\right.\\
				&\qquad\quad\left.-2 {\mathcal{F}_1}_{xy} u_{xx}+{\mathcal{F}_1}_{x} \left(3 u_{yyy}+5 u_{xxy}\right)+{\mathcal{F}_1}_{y}u_{xxx}\right)+ (1-2 \beta ) {\mathcal{F}_1}_{yyy} u_{x}\\
				&\qquad\quad+{\mathcal{F}_1}_{y} \left(u_{xyy}- (\gamma +4 \delta +\beta  u) u_{x} \right),\\
				C^3 &= -\mathcal{F}_1(x,y,t) u_{ x}.
			\end{align*}		
		\end{px}
		 This conserved vector will be nontrivial if and only if ${\mathcal{F}_1}_{ x}\ne0$.
		\begin{py}
			\begin{align*}
				C^1 &= \frac{1}{6} \left(2 \left(-2 u_{xy} {\mathcal{F}_1}_{xy}+(-1+6 \beta ) {\mathcal{F}_1}_{yy} u_{xx}+{\mathcal{F}_1}_{x} \left(u_{xyy}+3 u_{xxx}\right)\right)\right.\\
				&\qquad\left.-3 (2 \alpha +8 \delta +u) u_{x} {\mathcal{F}_1}_{x}-2 {\mathcal{F}_1}_{y} \left(3 u_{yyy}+u_{xxy}\right)\right)+ (1-2 \beta ) {\mathcal{F}_1}_{xyy} u_{x}\\
				& \qquad- {\mathcal{F}_1}(x,y,t) \left(\zeta -u_{t}+\beta  u_{y}{}^2+\gamma  u_{yy}+u \left(\epsilon +\delta  u+\beta  u_{yy}\right)-4 \delta  u_{xx}\right),\\
				C^2 &= \frac{1}{3} \left((2 \beta -3) {\mathcal{F}_1}_{yy} u_{xy}+(3 \gamma +4 \delta +3 \beta  u) u_{xy}{\mathcal{F}_1}(x,y,t) -2 {\mathcal{F}_1}_{xy} u_{xx}\right.\\
				&\qquad\quad\left.+5 u_{xxy}{\mathcal{F}_1}_{x}+u_{xxx}{\mathcal{F}_1}_{y}\right)+(1-2 \beta ) {\mathcal{F}_1}_{yyy} u_{x}\\
				&\qquad\quad+ \beta  u_{y} u_{x}{\mathcal{F}_1}(x,y,t)+ u_{yyy}{\mathcal{F}_1}_{x}+{\mathcal{F}_1}_{y} \left(u_{xyy}- (\gamma +4 \delta +\beta  u) u_{x}\right),\\
				C^3 &= -{\mathcal{F}_1}(x,y,t) u_{x}.
			\end{align*}
		\end{py}
		This conserved vector will be nontrivial if and only if ${\mathcal{F}_1}_{y}\ne0$.
		\begin{pxpy} 
			If, in addition, $\beta=1/2$ and $\gamma=\alpha$ then equation~\eqref{sc21} admits also the symmetry $x\partial _y-y\partial _x$. Employing it we have:
			\begin{align*}
				C^1 &= \frac{1}{6} \left(2 x u_{yyy} {\mathcal{F}_1}_{x}+3 y u u_{x} {\mathcal{F}_1}_{x}+u_{y} \left(4 {\mathcal{F}_1}_{yy}-3 x (2 \alpha +8 \delta +u) {\mathcal{F}_1}_{x}\right)\right.\\
				&\qquad\,+4 x {\mathcal{F}_1}_{yy} u_{xy}+8 {\mathcal{F}_1}_{x} u_{xy}-4 x u_{yy} {\mathcal{F}_1}_{xy}+4 u_{x} {\mathcal{F}_1}_{xy}+4 y u_{xy} {\mathcal{F}_1}_{xy}\\
				&\qquad\,-2 y {\mathcal{F}_1}_{x} u_{xyy}-4 y {\mathcal{F}_1}_{yy} u_{xx}+2 {\mathcal{F}_1}_{y} \left(2 u_{yy}+2 x u_{xyy}-2 u_{xx}+y u_{xxy}\right)\\
				&\qquad\,\left.+3 {\mathcal{F}_1}(x,y,t) \left(y u_{y}{}^2+x u_{y} u_{x}+u \left(u_{y}+y u_{yy}+x u_{xy}\right)\right)\right)+ y \alpha  u_{x} {\mathcal{F}_1}_{x}\\
				&\qquad\,+4 y \delta  u_{x} {\mathcal{F}_1}_{x}+x {\mathcal{F}_1}_{x} u_{xxy}-y {\mathcal{F}_1}_{x} u_{xxx}+ {\mathcal{F}_1}_{y} \left( y u_{yyy}+ x u_{xxx}\right)\\
				&\qquad\,+{\mathcal{F}_1}(x,y,t) \left(y \zeta +y \delta  u^2-y u_{t}+(\alpha +4 \delta ) u_{y}+ y \alpha  u_{yy}+ x \alpha  u_{xy}\right.\\
				&\qquad\,\left.+4 x \delta  u_{xy}+\epsilon y u - x u_{xyyy}-4 y \delta  u_{xx}\right),\\
				C^2 &= \frac{1}{6} \left(-4 x u_{yy} {\mathcal{F}_1}_{yy}+3 y u {\mathcal{F}_1}_{y} u_{x}+4 {\mathcal{F}_1}_{yy} u_{x}+4 u_{yy} {\mathcal{F}_1}_{x}-8 {\mathcal{F}_1}_{y} u_{xy}\right.\\
				&\qquad\,+4 y {\mathcal{F}_1}_{yy} u_{xy}-u_{y} \left(3 x (2 \alpha +8 \delta +u) {\mathcal{F}_1}_{y}+4 {\mathcal{F}_1}_{xy}\right)-4 x u_{xy} {\mathcal{F}_1}_{xy}\\
				&\qquad\,+4 x {\mathcal{F}_1}_{x} u_{xyy}-4 {\mathcal{F}_1}_{x} u_{xx}+4 y {\mathcal{F}_1}_{xy} u_{xx}+2 x {\mathcal{F}_1}_{y} u_{xxy}-10 y {\mathcal{F}_1}_{x} u_{xxy}\\
				&\qquad\,-2y {\mathcal{F}_1}_{y} u_{xxx}-{\mathcal{F}_1}(x,y,t) \left(8 \delta  u_{x}-8 x \delta  u_{yy}+3 y u_{y} u_{x}+3 x u_{x}{}^2\right.\\
				&\qquad\,\left.\left.+8 y \delta  u_{xy}+3 u \left(u_{x}+y u_{xy}+x u_{xx}\right)\right)\right)+x {\mathcal{F}_1}_{y} u_{yyy}+y \alpha  {\mathcal{F}_1}_{y} u_{x}\\
				&\qquad\,+4 y \delta  {\mathcal{F}_1}_{y} u_{x}-y u_{yyy} {\mathcal{F}_1}_{x}- y {\mathcal{F}_1}_{y} u_{xyy}- x {\mathcal{F}_1}_{x}u_{xxx}\\
				&\qquad\,-{\mathcal{F}_1}(x,y,t) \left(x \zeta + x \delta  u^2- x u_{t}+ \alpha  u_{x}+ y \alpha  u_{xy}- u_{xyy}+ x \alpha  u_{xx}\right.\\
				&\qquad\,\left.+  \epsilon x u- x u_{xxyy}\right),\\
				C^3 &= \mathcal{F}_0(x,y,t) \left(y u_{x}-x u_{y}\right).
			\end{align*}
		\end{pxpy}	
		This conserved vector will be nontrivial if and only if ${\mathcal{F}_1}\ne \mathcal{F}(x^2+y^2,t)$.		
	\end{subcase}
	\begin{subcase}[$r=u/2+\delta,  h=g^\prime, f=\frac{\alpha u^2}{2}+\beta u +\gamma + c \int\!\! g(u)\,du,\ \alpha,c,g^{\prime\prime}\ne0$]
		For this case equation~\eqref{gKS2} takes the form:
		 \begin{multline*}
		 	u_{t}=\frac{\alpha u^2}{2}+\beta u+\gamma+\boldsymbol{c}\int\!\! g(u)\,du+g^{\prime} u_{y}{}^2+ g(u) u_{yy}-u_{yyyy}+\frac{u_{x}^2}{2}\\+\left(\frac{u}{2}+\delta\right) u_{xx}
			-2 u_{xxyy}-u_{xxxx}.
		\end{multline*}
		The new dependent variable is
		\begin{multline*}
			\upsilon=e^{t \left(4 \alpha ^2-\beta +\boldsymbol{c}{}^2+2 \alpha  \left(\delta +2 \boldsymbol{c}\right)\right)}\left(\cos(\sqrt{2\alpha }x) \left(\boldsymbol{c}_1 \cos(\sqrt{\boldsymbol{c}}y)+\boldsymbol{c}_3 \sin(\sqrt{\boldsymbol{c}}y)\right)+\right.\\
				      \left.\sin(\sqrt{2\alpha }x) \left(\boldsymbol{c}_2 \cos(\sqrt{\boldsymbol{c}}y)+\boldsymbol{c}_4 \sin(\sqrt{\boldsymbol{c}}y)\right)\right).
		\end{multline*}
		\begin{pt}
			\begin{align*}
				C^1 &= \frac{1}{4}e^{\mathcal{D} t}  \mathcal{D}  \left(\mathcal{A}\sqrt{2\alpha }  \left(8 \alpha +4 \delta +8 \boldsymbol{c}+u\right)u-2 \mathcal{B}  \left(4 \alpha +2 \delta +4 \boldsymbol{c}+u\right) u_{x}\right.\\
				  	&\qquad\qquad\left.-4 \mathcal{A} \sqrt{2\alpha } \mathcal{A}   u_{xx}+4 \mathcal{B} u_{xxx}\right),\\
				C^2 &= e^{\mathcal{D} t} \mathcal{D} \left(\mathcal{C}  \sqrt{\boldsymbol{c}} \left(\int\!\! g(u)\,du+\boldsymbol{c} u\right)-\mathcal{B} \left(\boldsymbol{c}+g(u)\right) u_{y}-\mathcal{C}  \sqrt{\boldsymbol{c}} u_{yy}\right.\\
					&\qquad\quad\left.+\mathcal{B}\mathcal{D} u_{yyy}-2 \mathcal{C} \mathcal{D} \sqrt{\boldsymbol{c}} u_{xx}+2 \mathcal{B} \mathcal{D} u_{xxy}\right),\\
				C^3 &= -e^{\mathcal{D} t}\mathcal{B} (\gamma -\mathcal{D} u).
			\end{align*}
		\end{pt}	
		This conserved vector will be nontrivial if and only if $\mathcal{D}\ne0$.	
		\begin{px}
			\begin{align*}
				C^1 &= \frac{1}{2} e^{\mathcal{D} t}\left(4 \mathcal{B} \alpha  u_{xx}+2  \sqrt{2\alpha }\mathcal{A} u_{xxx}-2 \mathcal{B} \gamma -4 \mathcal{B} \alpha  \left(2 \alpha +\delta +2 \boldsymbol{c}\right) u-\mathcal{B} \alpha  u^2\right.\\
					&\qquad\qquad\left. -\sqrt{2\alpha }\mathcal{A} \left(4 \alpha +2 \delta +4 \boldsymbol{c}+u\right) u_{x}\right),\\
				C^2 &= \sqrt{2\alpha} e^{\mathcal{D} t}\left(\mathcal{E} \sqrt{\boldsymbol{c}} \int\!\! g(u)\,du+\mathcal{E} \boldsymbol{c}{}^{3/2} u-(\mathcal{A} \boldsymbol{c}+\mathcal{A} g(u) )u_{y}-\mathcal{E} \sqrt{\boldsymbol{c}} u_{yy}\right.\\
					&\qquad\qquad\quad\left.+\mathcal{A} u_{yyy}-2 \mathcal{E} \sqrt{\boldsymbol{c}} u_{xx}+2 \mathcal{A} u_{xxy}\right),\\
				C^3 &= \sqrt{2\alpha }e^{\mathcal{D} t} \mathcal{A}  u.
			\end{align*}		
		\end{px}								
		\begin{py}
			\begin{align*}
				C^1 &=  \frac{1}{4}\sqrt{\boldsymbol{c}}e^{\mathcal{D} t}\left(4 \mathcal{C} u_{xxx}+\sqrt{2} \mathcal{E} \sqrt{\alpha } \left(u \left(8 \alpha +4 \delta +8 \boldsymbol{c}+u\right)-4 u_{xx}\right)\right.\\
					&\left.\qquad\qquad\qquad-2 \mathcal{C} \left(4 \alpha +2 \delta +4 \boldsymbol{c}+u\right) u_{x}\right),\\
				C^2 &=  -e^{\mathcal{D} t}\left(\mathcal{B} \gamma +\mathcal{B} \boldsymbol{c} \int\!\! g(u)\,du+\mathcal{B} \boldsymbol{c}{}^2 u+\mathcal{C} \sqrt{\boldsymbol{c}} \left(\boldsymbol{c}+g(u)\right) u_{y}\right.\\
					&\left.\qquad\qquad-\sqrt{\boldsymbol{c}} \left(\mathcal{B} \sqrt{\boldsymbol{c}} \left(u_{yy}+2 u_{xx}\right)+\mathcal{C} \left(u_{yyy}+2 u_{xxy}\right)\right)\right),\\
				C^3 &=   \mathcal{C} \sqrt{\boldsymbol{c}}e^{\mathcal{D} t}u.
			\end{align*}	
		\end{py}
		In the above formulae
		\begin{align*}
			\mathcal{\mathcal{A}}&=\sin(\sqrt{2\alpha }x) \left(\boldsymbol{c}_1 \cos(\sqrt{\boldsymbol{c}}y)+\boldsymbol{c}_3 \sin(\sqrt{\boldsymbol{c}}y)\right)\\
			&\qquad\qquad\qquad\quad-\cos(\sqrt{2\alpha }x) \left(\boldsymbol{c}_2 \cos(\sqrt{\boldsymbol{c}}y)+\boldsymbol{c}_4 \sin(\sqrt{\boldsymbol{c}}y)\right),\\
			\mathcal{\mathcal{B}}&=\cos(\sqrt{2\alpha }x) \left(\boldsymbol{c}_1 \cos(\sqrt{\boldsymbol{c}}y)+\boldsymbol{c}_3 \sin(\sqrt{\boldsymbol{c}}y)\right)\\
			&\qquad\qquad\qquad\quad+\sin(\sqrt{2\alpha }x) \left(\boldsymbol{c}_2 \cos(\sqrt{\boldsymbol{c}}y)+\boldsymbol{c}_4 \sin(\sqrt{\boldsymbol{c}}y)\right),\\
			\mathcal{\mathcal{C}}&=\cos(\sqrt{2\alpha }x) \left(\boldsymbol{c}_3 \cos(\sqrt{\boldsymbol{c}}y)-\boldsymbol{c}_1 \sin(\sqrt{\boldsymbol{c}}y)\right)\\
			&\qquad\qquad\qquad\quad+\sin(\sqrt{2\alpha }x) \left(\boldsymbol{c}_4 \cos(\sqrt{\boldsymbol{c}}y)-\boldsymbol{c}_2 \sin(\sqrt{\boldsymbol{c}}y)\right),\\
			\mathcal{\mathcal{D}}&=2 \alpha  (2 \alpha +\delta )+4 \alpha  \boldsymbol{c}+\boldsymbol{c}{}^2-\beta\\
			\text{and} \\
			\mathcal{E}&=\sin(\sqrt{2\alpha }x) \left(\boldsymbol{c}_1 \sin(\sqrt{\boldsymbol{c}}y)-\boldsymbol{c}_3 \cos(\sqrt{\boldsymbol{c}}y)\right)\\
			&\qquad\qquad\qquad\quad+\cos(\sqrt{2\alpha }x) \left(\boldsymbol{c}_4 \cos(\sqrt{\boldsymbol{c}}y)-\boldsymbol{c}_2 \sin(\sqrt{\boldsymbol{c}}y)\right).
		\end{align*}						
	\end{subcase}
	\begin{subcase}[$r=u/2+\gamma,  h=g^\prime, f=\alpha  u +\beta + c \int\!\! g(u)\,du,\ c,g^{\prime\prime}\ne0$]
		For this case equation~\eqref{gKS2} assumes the form:
		 \begin{multline*}
		 	u_{t}=\beta  u +\gamma + c \int\!\! g(u)\,du+g^{\prime} u_{y}{}^2+g(u) u_{yy}-u_{yyyy}+\frac{u_{x}^2}{2}+\frac{2\alpha+u}{2} u_{xx}-2 u_{xxyy}\\
			-u_{xxxx}.
		\end{multline*}
		The new dependent variable $\upsilon$ is,
		$$
			\upsilon= e^{  \left(\boldsymbol{c}^2-\alpha\right)t} \left(\left(\boldsymbol{c}_1+ \boldsymbol{c}_2x\right) \cos(\sqrt{\boldsymbol{c}}y)+\left(\boldsymbol{c}_3+ \boldsymbol{c}_4x\right) \sin(\sqrt{\boldsymbol{c}}y)\right).
		$$
		\begin{pt}
			\begin{align*}
				C^1 &= \frac{1}{4}e^{\mathcal{D} t}\mathcal{D}  \left(\mathcal{B} u \left(4 \gamma +8 \boldsymbol{c}+u\right)-2 \mathcal{A} \left(2 \gamma +4 \boldsymbol{c}+u\right) u_{x}-4 \mathcal{B} u_{xx}+4 \mathcal{A} u_{xxx}\right),\\
				C^2 &= e^{\mathcal{D} t}\mathcal{D}\left(\mathcal{C} \sqrt{\boldsymbol{c}} \int\!\! g(u)\,du+\mathcal{C} \boldsymbol{c}{}^{3/2} u-\mathcal{A} \boldsymbol{c} u_{y}-\mathcal{A} g(u) u_{y}-\mathcal{C} \sqrt{\boldsymbol{c}} u_{yy}+\mathcal{A} u_{yyy}\right.\\
					&\qquad\qquad\left.-2 \mathcal{C} \sqrt{\boldsymbol{c}} u_{xx}+2 \mathcal{A} u_{xxy}\right), \\
				C^3 &= -e^{\mathcal{D} t}\mathcal{A} (\beta -\mathcal{D} u).
			\end{align*}		
		\end{pt}	
		This conserved vector will be nontrivial if and only if $\mathcal{D}\ne0$.	
		\begin{px}
			\begin{align*}
				C^1 &=\frac{1}{2} e^{\mathcal{D} t} \mathcal{B}  \left(u \left(2 \mathcal{D} x-u_{x}\right)+2 \left(x u_{t}- \beta x -\gamma  u_{x}+2 u_{xyy}+u_{xxx}\right)\right),\\
				C^2 &= e^{\mathcal{D} t} \left(\mathcal{E} \sqrt{\boldsymbol{c}} \left(\int\!\! g(u)\,du+\boldsymbol{c} u\right)-\mathcal{B} \left(\boldsymbol{c}+g(u)\right) u_{y}-\mathcal{E} \sqrt{\boldsymbol{c}} u_{yy}+\mathcal{B} u_{yyy}\right),\\
				C^3 &= -e^{\mathcal{D} t}\mathcal{B} x u_{x}.
			\end{align*}		
		\end{px}
		This conserved vector will be nontrivial if and only if $\boldsymbol{c}_2^2+\boldsymbol{c}_4^2\ne0$.	
		\begin{py}
			\begin{align*}
				C^1 &= \frac{1}{4}e^{\mathcal{D} t}\sqrt{\boldsymbol{c}} \left(2  \beta x \left(2 \boldsymbol{c}_1+ \boldsymbol{c}_2x\right) \sin(\sqrt{\boldsymbol{c}}y)+\mathcal{E} u^2-4 \mathcal{E} \left(2 u_{yy}+u_{xx}\right)\right.\\
					&\qquad\qquad\qquad -2 \beta  x\left(2 \boldsymbol{c}_3+ \boldsymbol{c}_4x\right) \cos(\sqrt{\boldsymbol{c}}y)+u \left(4 \mathcal{E} \gamma -2 \mathcal{C} u_{x}\right)\\
					&\qquad\qquad\qquad\left.+4 \mathcal{C} \left(u_{xxx}-\gamma  u_{x}+2 u_{xyy}\right)\right),\\
				C^2 &= \sqrt{\boldsymbol{c}}e^{\mathcal{D} t}\left(\mathcal{A} \sqrt{\boldsymbol{c}} u_{yy}+\mathcal{C} u_{yyy}-\mathcal{A} \sqrt{\boldsymbol{c}} \left(\int\!\! g(u)\,du+\boldsymbol{c} u\right)\right.\\
				&\qquad\qquad\quad\left.-\mathcal{C} \left(\boldsymbol{c}+g(u)\right) u_{y}\right),\\
				C^3 &= \mathcal{C} \sqrt{\boldsymbol{c}} e^{\mathcal{D} t}.
			\end{align*}		
		\end{py}	
		For the above conserved vectors we have used the following notation:
		\begin{align*}
			\mathcal{A}&=\left(\boldsymbol{c}_1+ \boldsymbol{c}_2x\right)\cos(\sqrt{\boldsymbol{c}} y)+\left(\boldsymbol{c}_3+ \boldsymbol{c}_4x\right)\sin(\sqrt{\boldsymbol{c}} y),\\
			\mathcal{B}&=\boldsymbol{c}_2\cos(\sqrt{\boldsymbol{c}} y)+\boldsymbol{c}_4\sin(\sqrt{\boldsymbol{c}} y),\\
			\mathcal{C}&=\left(\boldsymbol{c}_3+ \boldsymbol{c}_4x\right)\cos(\sqrt{\boldsymbol{c}} y)-\left(\boldsymbol{c}_1+ \boldsymbol{c}_2x\right)\sin(\sqrt{\boldsymbol{c}} y),\\
			\mathcal{D}&= \boldsymbol{c}{}^2-\alpha\\
			\text{and} \\
			\mathcal{E}&=\boldsymbol{c}_4\cos(\sqrt{\boldsymbol{c}} y)-\boldsymbol{c}_2\sin(\sqrt{\boldsymbol{c}} y).
		\end{align*}								
	\end{subcase}
	\begin{subcase}[$r=u/2+\delta,  h=g^\prime, f=\alpha u^2+\beta u +\gamma ,\ \alpha,g^{\prime\prime}\ne0$]
		For this case equation~\eqref{gKS2} takes the form:
		 \begin{multline*}
		 	u_{t}=u_{t}=\alpha  u^2 +\beta  u+\gamma+g^\prime u_{y}{}^2+g(u) u_{yy}-u_{yyyy}+\frac{u_{x}{}^2}{2}+\left(\delta +\frac{1}{2} u\right) u_{xx}\\
			-2 u_{xxyy}-u_{xxxx}.
		\end{multline*}
		The new dependent variable $\upsilon$ is,
		$$
			\upsilon = e^{\left(16 \alpha ^2-\beta +4 \alpha  \delta \right)t} \left(\left(\boldsymbol{c}_1y +\boldsymbol{c}_2\right) \cos(2 \sqrt{\alpha }x)+\left(\boldsymbol{c}_3 y+\boldsymbol{c}_4\right) \sin(2 \sqrt{\alpha }x)\right).
		$$
		\begin{pt}
			\begin{align*}
				C^1 &= \frac{1}{2}e^{\mathcal{D} t}\mathcal{D}\left(\mathcal{B} \sqrt{\alpha } u^2-2 \left(\mathcal{A} (4 \alpha +\delta ) u_{x}+2 \mathcal{B} \sqrt{\alpha } u_{xx}-\mathcal{A} u_{xxx}\right)\right.\\
						&\qquad\qquad\quad\left.+ \left(4 \mathcal{B} \sqrt{\alpha } (4 \alpha +\delta )u-\mathcal{A} u_{x}\right)\right). \\
				C^2 &= -e^{\mathcal{D} t}\left(g(u) \left(\mathcal{C} u_{t}+\mathcal{A} \mathcal{D} u_{y}\right)+\mathcal{D} \left(\mathcal{C} u_{yy}-\mathcal{A} u_{yyy}+2 \left(\mathcal{C} u_{xx}-\mathcal{A} u_{xxy}\right)\right)\right).\\
				C^3 &= e^{\mathcal{D} t}\left(\mathcal{A} \mathcal{D} u+\mathcal{C} g(u) u_{y}-\mathcal{A} \gamma\right);
			\end{align*}		
		\end{pt}
		this conserved vector will be nontrivial if and only if $\mathcal{D}\ne0$.	
		\begin{px}
			\begin{align*}
				C^1 &= e^{\mathcal{D} t}\left(\mathcal{C} g(u) u_{y}-\mathcal{A} \alpha  u^2-u \left(4 \mathcal{A} \alpha  (4 \alpha +\delta )+\mathcal{B} \sqrt{\alpha } u_{x}\right)\right.\\
					&\qquad\qquad\left. -2 \sqrt{\alpha } \left(\mathcal{B} (4 \alpha +\delta ) u_{x}-2 \mathcal{A} \sqrt{\alpha } u_{xx}-\mathcal{B} u_{xxx}\right)\right),\\
				C^2 &=  e^{\mathcal{D} t}\left(\sqrt{\alpha }\left(2 \mathcal{B} u_{yyy}+\sin(2 \sqrt{\alpha }x) \left( \gamma  \left(\boldsymbol{c}_1y+2 \boldsymbol{c}_2\right)y-4 \left(\boldsymbol{c}_1y+\boldsymbol{c}_2\right) u_{xxy}  \right.\right.\right.\\
					&\qquad\qquad\left. +2 \boldsymbol{c}_1 \left(u_{yy}+2 u_{xx}\right)\right)-\cos(2  \sqrt{\alpha }x) \left( 2 \boldsymbol{c}_3 \left(u_{yy}+2 u_{xx}\right)\right.\\
					&\qquad\qquad\left.\left.+\gamma  \left(\boldsymbol{c}_3y+2 \boldsymbol{c}_4\right)y-4 \left(\boldsymbol{c}_3y+\boldsymbol{c}_4\right) u_{xxy}\right)\right)\\
					&\qquad\qquad\left.-g(u) \left(2 \mathcal{B} \sqrt{\alpha } u_{y}+\mathcal{C} u_{x}\right)\right),\\
				C^3 &= 2e^{\mathcal{D} t}\mathcal{B} \sqrt{\alpha } u.
			\end{align*}		
		\end{px}
		\begin{py}
			\begin{align*}
				C^1 &= \frac{1}{2} e^{\mathcal{D} t}\left(\mathcal{E} \sqrt{\alpha } u^2+ \left(4 \mathcal{E} \sqrt{\alpha } (4 \alpha +\delta )-\mathcal{C} u_{x}\right)u\right.\\
							&\qquad\qquad\left.-2 \left(\mathcal{C} (4 \alpha +\delta ) u_{x}+2 \mathcal{E} \sqrt{\alpha } u_{xx}-\mathcal{C} u_{xxx}\right)\right),\\
				C^2 &= \mathcal{C} e^{\mathcal{D} t}\left(u_{yyy}+2 u_{xxy}-\gamma y -g(u) u_{y}\right),\\
				C^3 &= e^{\mathcal{D} t}\mathcal{C}  u;
			\end{align*}		
		\end{py}
		
		where 
		\begin{align*}
			\mathcal{A}&=\left(\boldsymbol{c}_1y+\boldsymbol{c}_2\right) \cos(2 \sqrt{\alpha}x)+\left(\boldsymbol{c}_3 y+\boldsymbol{c}_4\right) \sin(2 \sqrt{\alpha }x),\\
			\mathcal{B}&=\left(\boldsymbol{c}_3 y +\boldsymbol{c}_4\right) \cos(2\sqrt{\alpha }x)-\left(\boldsymbol{c}_1y+\boldsymbol{c}_2\right) \sin(2 \sqrt{\alpha }x),\\
			\mathcal{C}&=\boldsymbol{c}_1 \cos(2\sqrt{\alpha }x)+\boldsymbol{c}_3 \sin(2 \sqrt{\alpha }x),\\
			\mathcal{D}&=16 \alpha ^2-\beta +4 \alpha  \delta\\
			\text{and} \\
			\mathcal{E}&= \boldsymbol{c}_3 \cos(2\sqrt{\alpha }x)-\boldsymbol{c}_1 \sin(2 \sqrt{\alpha }x).
		\end{align*}	
		This conserved vector will be nontrivial if and only if $\boldsymbol{c}_1^2+\boldsymbol{c}_3^2\ne0$.	
	\end{subcase}	
	\begin{subcase}[$r=u/2+\gamma,  h=g^\prime, f=\alpha  u +\beta,\ g^{\prime\prime}\ne0$]
		For this case equation~\eqref{gKS2} assumes the form:
		 \begin{multline*}
		 	u_{t}=u_{t}=\alpha u +\beta +g^\prime u_{y}{}^2+g(u) u_{yy}-u_{yyyy}+\frac{u_{x}{}^2}{2}+\left(\gamma +\frac{1}{2} u\right) u_{xx}-2 u_{xxyy}\\
			-u_{xxxx}.
		\end{multline*}
		The new dependent variable $\upsilon$ is,
		$$
			\upsilon = 	e^{-\alpha t} \left(\left(\boldsymbol{c}_1+ \boldsymbol{c}_2x\right)y+\boldsymbol{c}_3+ \boldsymbol{c}_4x\right).	
		$$
		\begin{pt}
			\begin{align*}
				C^1 &= \frac{\alpha}{4} e^{-\alpha t}\left(2\left(\mathcal{A}u u_{x}-2 \gamma\mathcal{B}\right)+4\gamma\mathcal{A} u_{x}+4\mathcal{B} u_{xx}-\mathcal{B} u^2-4\mathcal{A} u_{xxx}\right),\\
				C^2 &= e^{-\alpha t} \left(g(u) \left(\alpha  \mathcal{A} u_{y}-\mathcal{C} u_{t}\right)+\alpha  \left(\mathcal{C} u_{yy}-\mathcal{A} u_{yyy}+2\mathcal{C} u_{xx}-2 \mathcal{A} u_{xxy}\right)\right),\\
				C^3 &= e^{-\alpha t}\left(\mathcal{C} g(u) u_{y}-\mathcal{A} (\beta +\alpha  u)\right).
			\end{align*}		
		\end{pt}		
		This conserved vector will be nontrivial if and only if $\alpha\ne0$.		
		\begin{px}
			\begin{align*}
				C^1 &= \frac{1}{2}e^{-\alpha t}\left(2 \boldsymbol{c}_2 x g(u)u_{y}-\mathcal{B}\left(2 \beta x+(2 \gamma +u) u_{x}-4 u_{xyy}-2 u_{xxx}\right)\right),\\
				C^2 &= e^{-\alpha t}\left(\mathcal{B} u_{yyy}-\boldsymbol{c}_2 u_{yy}-g(u) \left(\mathcal{B} u_{y}+\boldsymbol{c}_2x u_{x}\right)\right),\\
				C^3 &= e^{-\alpha t}\mathcal{B} u.
			\end{align*}		
		\end{px}		
		This conserved vector will be nontrivial if and only if $\boldsymbol{c}_2^2+\boldsymbol{c}_4^2\ne0$.			
		\begin{py}
			\begin{align*}
				C^1 &=  \frac{1}{4}e^{-\alpha t}\left(\boldsymbol{c}_2 u^2-4 \gamma  \mathcal{C} u_{x}+2 u \left(2 \gamma  \boldsymbol{c}_2-\mathcal{C}u_{x}\right)-4 \boldsymbol{c}_2 u_{xx}+4 \mathcal{C} u_{xxx}\right),\\
				C^2 &=  e^{-\alpha t}\mathcal{C}\left(u_{yyy}+2 u_{xxy}- \beta y -g(u) u_{y}\right),\\
				C^3 &=  e^{-\alpha t}\mathcal{C} u.
			\end{align*}		
		\end{py}
		This conserved vector will be nontrivial if and only if $\boldsymbol{c}_1^2+\boldsymbol{c}_2^2\ne0$.
		
		In the last three conserved vectors
		\begin{align*}
			\mathcal{A}&=\left(\boldsymbol{c}_1+ \boldsymbol{c}_2x\right)y+\boldsymbol{c}_3+ \boldsymbol{c}_4x,\\
			\mathcal{B}&=\boldsymbol{c}_2 y+\boldsymbol{c}_4,\\
			\text{and} \\
			\mathcal{C}&= \boldsymbol{c}_1+\boldsymbol{c}_2x.
		\end{align*}		
	\end{subcase}			
\end{case}

\section{Conclusions}
In the present work a generalization of the anisotropic \KSII\ was studied under the prism of the modern group analysis. Specifically, two distinct classifications were performed; one with respect to the Lie point symmetries and a second with respect to the property of self-adjointness. The wealth of information obtained by those two classifications not only shed light to the structure of the generalization studied, by highlighting the interesting subcases, but provides us also with ways for attaining analytical nontrivial solutions.  A fact that will be the subject of a future work.

\section*{Acknowledgements}

Yuri Bozhkov would like to thank CNPq, Brazil, for partial financial support. Stylianos Dimas is grateful to FAPESP (Proc. \#2011/05855-9)  for the financial support and IMECC-UNICAMP for their gracious hospitality. 

\appendix

\section{The Determining Equations}\label{app:det}

$$
 	\xi ^3{}_{x}=0,
$$
$$
	 \xi ^3{}_{y}=0 ,
$$
$$
 	\xi ^3{}_{u}=0 ,
$$
$$
 	\xi ^3{}_{xy}=0 ,
$$
$$
	 \xi ^3{}_{xu}=0 ,
$$
$$
 	\xi ^3{}_{yu}=0 ,
$$
$$
	 \xi ^3{}_{uu}=0 ,
$$
$$
	 \xi ^3{}_{xuu}=0 ,
$$
$$
 	\xi ^3{}_{xyu}=0 ,
$$
$$
 	\xi ^3{}_{yuu}=0 ,
$$
$$
	 \xi ^3{}_{uuu}=0 ,
$$
$$
 	\xi ^3{}_{xuuu}=0 ,
$$
$$
 	\xi ^3{}_{xyuu}=0 ,
$$
$$
 	\xi ^3{}_{yuuu}=0 ,
$$
$$
	 \xi ^3{}_{uuuu}=0 ,
$$
$$
 	g(u) \xi ^3{}_{u}=0 ,
$$
$$
 	r(u) \xi ^3{}_{u}=0 ,
$$
$$
 	3 \xi ^3{}_{yy}+\xi ^3{}_{xx}=0 ,
$$
$$
 	\xi ^3{}_{yy}+3 \xi ^3{}_{xx}=0 ,
$$
$$
 	7\xi ^3{}_{yy}+5 \xi ^3{}_{xx}=0 ,
$$
$$
 	5 \xi ^3{}_{yy}+7 \xi ^3{}_{xx}=0 ,
$$
$$
 	\xi ^2{}_{uuu}+2 \xi ^3{}_{yuu}=0 ,
$$
$$
 	\xi ^2{}_{uuuu}+2 \xi ^3{}_{yuuu}=0 ,
$$
$$
	 \xi ^1{}_{uuuu}+2 \xi ^3{}_{xuuu}=0 ,
$$
$$
 	(g(u)+r(u)) \xi ^3{}_{u}=0 ,
$$
$$
 	\xi ^1{}_{yuu}+\xi ^2{}_{xuu}+\xi ^3{}_{xyu}=0 ,
$$
$$
 	 \xi ^1{}_{y}+\xi ^2{}_{x}+2 g(u) \xi ^3{}_{xy}=0 ,
$$
$$
 	\xi ^1{}_{y}+\xi ^2{}_{x}+2 r(u) \xi ^3{}_{xy}=0 ,
$$
$$
 	2 h' \xi ^3{}_{u}+(1+h(u)) \xi ^3{}_{uu}=0 ,
$$
$$
 	2 \left(1+r'\right) \xi ^3{}_{u}+r(u) \xi ^3{}_{uu}=0 ,
$$
$$
 	\xi ^1{}_{yuuu}+\xi ^2{}_{xuuu}+\xi ^3{}_{xyuu}=0 ,
$$
$$
 	8 h' \xi ^3{}_{u}+(1+4 h(u)) \xi ^3{}_{uu}=0 ,
$$
 $$
 	\left(11+16 r'\right) \xi ^3{}_{u}+8 r(u) \xi ^3{}_{uu}=0 ,
$$
$$
 	6 \xi ^3{}_{yyu}+2 \xi ^3{}_{xxu} -6 g(u) \xi ^3{}_{u}=0 ,
$$
$$
	6 \xi ^3{}_{yyu}+2 \xi ^3{}_{xxu} -2 r(u) \xi ^3{}_{u}=0 ,
$$
$$
	 2 \xi ^3{}_{yyu}+6 \xi ^3{}_{xxu}-2 g(u) \xi ^3{}_{u}=0 ,
$$
$$
	2 \xi ^3{}_{yyu}+6 \xi ^3{}_{xxu} -6 r(u) \xi ^3{}_{u}=0 ,
$$
$$
 	6 g(u) \xi ^3{}_{y}-4 \left(\xi ^3{}_{yyy}+\xi ^3{}_{xxy}\right)=0 ,
$$
$$
 	6 r(u) \xi ^3{}_{x}-4 \left(\xi ^3{}_{xyy}+\xi ^3{}_{xxx}\right)=0 ,
$$
$$
 	 6 g(u) \xi ^3{}_{u}-4 \left(3 \xi ^3{}_{yyu}+\xi ^3{}_{xxu}\right)=0 ,
$$
$$
 	6 r(u) \xi ^3{}_{u}-4 \left(\xi ^3{}_{yyu}+3 \xi ^3{}_{xxu}\right)=0 ,
$$
$$
 	 4 h(u) \xi ^3{}_{u}+2 g' \xi ^3{}_{u}+g(u) \xi ^3{}_{uu}=0 ,
$$
$$
 	\xi ^2{}_{u}+\left(1+2 g'\right) \xi ^3{}_{y}+2 g(u) \xi ^3{}_{yu}=0 ,
$$
$$
 	\xi ^2{}_{u}+\left(1+2 r'\right) \xi ^3{}_{y}+2 r(u) \xi ^3{}_{yu}=0 ,
$$
$$
  	\xi ^2{}_{uu}+2 h' \xi ^3{}_{y}+(1+2 h(u)) \xi ^3{}_{yu}=0 ,
$$
$$
 	\xi ^1{}_{uu}+2 h' \xi ^3{}_{x}+(1+2 h(u)) \xi ^3{}_{xu}=0 ,
$$
$$
 	11 h(u) \xi ^3{}_{u}+8 g' \xi ^3{}_{u}+4 g(u) \xi ^3{}_{uu}=0 ,
$$
$$
  	5 \xi ^1{}_{u}+\left(6+8 r'\right) \xi ^3{}_{x}+8 r(u) \xi ^3{}_{xu}=0 ,
$$
$$
  	 \left(3+8 h(u)+8 g'\right) \xi ^3{}_{u}+4 g(u) \xi ^3{}_{uu}=0 ,
$$
$$
 	\left(1+2 h(u)+4 r'\right) \xi ^3{}_{u}+2 r(u) \xi ^3{}_{uu}=0 ,
$$
$$
 	\left(2+3 h(u)+4 r'\right) \xi ^3{}_{u}+2 r(u) \xi ^3{}_{uu}=0 ,
$$
$$
 	\left(5+6 h(u)+4 r'\right) \xi ^3{}_{u}+2 r(u) \xi ^3{}_{uu}=0 ,
$$
$$
 	\left(1+2 h(u)+4 g'\right) \xi ^3{}_{u}+2 g(u) \xi ^3{}_{uu}=0 ,
$$
$$
 	3 \xi ^2{}_{u}+4 \left(\left(1+r'\right) \xi ^3{}_{y}+r(u) \xi ^3{}_{yu}\right)=0 ,
$$
$$
 	 \xi ^1{}_{yu}+\xi ^2{}_{xu}+2 g' \xi ^3{}_{xy}+2 g(u) \xi ^3{}_{xyu}=0 ,
$$
$$
 	\xi ^1{}_{yu}+\xi ^2{}_{xu}+2 r' \xi ^3{}_{xy}+2 r(u) \xi ^3{}_{xyu}=0 ,
$$
$$
  	\left(3+10 h(u)+4 g'\right) \xi ^3{}_{u}+2 g(u) \xi ^3{}_{uu}=0 ,
$$
$$
 	3 h^{\prime \prime } \xi ^3{}_{u}+3 h' \xi ^3{}_{uu}+(1+h(u)) \xi ^3{}_{uuu}=0 ,
$$
$$
 	3 r^{\prime \prime } \xi ^3{}_{u}+\left(2+3 r'\right) \xi ^3{}_{uu}+r(u) \xi ^3{}_{uuu}=0 ,
$$
$$
 	3 g^{\prime \prime } \xi ^3{}_{u}+\left(1+3 g'\right) \xi ^3{}_{uu}+g(u) \xi ^3{}_{uuu}=0 ,
$$
$$
  	  \xi ^1{}_{yuu}+\xi ^2{}_{xuu}+2 h' \xi ^3{}_{xy}+2 h(u) \xi ^3{}_{xyu}=0 ,
$$
$$
          \xi ^1{}_{u}+2 \left(h(u) \xi ^3{}_{x}+g' \xi ^3{}_{x}+g(u) \xi ^3{}_{xu}\right)=0 ,
$$
$$
 	\xi ^1{}_{u}+2 \left(h(u) \xi ^3{}_{x}+r' \xi ^3{}_{x}+r(u) \xi ^3{}_{xu}\right)=0 ,
$$
$$
  	3 \xi ^1{}_{u}+8 h(u) \xi ^3{}_{x}+4 g' \xi ^3{}_{x}+4 g(u) \xi ^3{}_{xu}=0 ,
$$
$$
   	2 g(u) \xi ^3{}_{y}-4 \left(\xi ^3{}_{yyy}+\xi ^3{}_{xxy}-r(u) \xi ^3{}_{y}\right)=0 ,
$$
$$
 	12 h^{\prime \prime } \xi ^3{}_{u}+12 h' \xi ^3{}_{uu}+(1+4 h(u)) \xi ^3{}_{uuu}=0 ,
$$
$$
  	\xi ^1{}_{uuu}+4 h^{\prime \prime } \xi ^3{}_{x}+8 h' \xi ^3{}_{xu}+4 h(u) \xi ^3{}_{xuu}=0 ,
$$
$$
   	4 g(u) \xi ^3{}_{x}-8 \left(\xi ^3{}_{xyy}+\xi ^3{}_{xxx}-r(u) \xi ^3{}_{x}\right)=0 ,
$$
$$
    	5 \xi ^2{}_{u}+12 h(u) \xi ^3{}_{y}+8 g' \xi ^3{}_{y}+8 g(u) \xi ^3{}_{yu}=0 ,
$$
$$
         	8 g(u) \xi ^3{}_{y}+4 r(u) \xi ^3{}_{y}-8 \left(\xi ^3{}_{yyy}+\xi ^3{}_{xxy}\right)=0 ,
$$
$$
	4 g(u) \xi ^3{}_{x}+2 r(u) \xi ^3{}_{x}-4 \left(\xi ^3{}_{xyy}+\xi ^3{}_{xxx}\right)=0 ,
$$
$$
	 \left(1+4 g'+4 r'\right) \xi ^3{}_{u}+2 (g(u)+r(u)) \xi ^3{}_{uu}=0 ,
$$
$$
    	2 g(u) \xi ^3{}_{u}-4 \left(3 \xi ^3{}_{yyu}+\xi ^3{}_{xxu}-r(u) \xi ^3{}_{u}\right)=0 ,
$$
$$
      	4 g(u) \xi ^3{}_{u}-8 \left(\xi ^3{}_{yyu}+3 \xi ^3{}_{xxu}-r(u) \xi ^3{}_{u}\right)=0 ,
$$
$$
      	12 r^{\prime \prime } \xi ^3{}_{u}+\left(13+12 r'\right) \xi ^3{}_{uu}+4 r(u) \xi ^3{}_{uuu}=0 ,
$$
$$
  	3 \xi ^2{}_{u}+\left(3+4 h(u)+2 g'\right) \xi ^3{}_{y}+2 g(u) \xi ^3{}_{yu}=0 ,
$$
$$
     	2 g(u) \xi ^3{}_{u}+6 r(u) \xi ^3{}_{u}-4 \left(3 \xi ^3{}_{yyu}+\xi ^3{}_{xxu}\right)=0 ,
$$
$$
 	8 g(u) \xi ^3{}_{u}+4 r(u) \xi ^3{}_{u}-8 \left(3 \xi ^3{}_{yyu}+\xi ^3{}_{xxu}\right)=0 ,
$$
$$
  	6 g(u) \xi ^3{}_{u}+2 r(u) \xi ^3{}_{u}-4 \left(\xi ^3{}_{yyu}+3 \xi ^3{}_{xxu}\right)=0 ,
$$
$$
 	4 g(u) \xi ^3{}_{u}+2 r(u) \xi ^3{}_{u}-4 \left(\xi ^3{}_{yyu}+3 \xi ^3{}_{xxu}\right)=0 ,
$$
$$
 	3 \xi ^1{}_{u}+2 \left(\left(1+3 h(u)+r'\right) \xi ^3{}_{x}+r(u) \xi ^3{}_{xu}\right)=0 ,
$$
$$
 	\left(5+12 g'+4 r'\right) \xi ^3{}_{u}+2 (3 g(u)+r(u)) \xi ^3{}_{uu}=0 ,
$$
$$
 	2 \left(1+3 g'+2 r'\right) \xi ^3{}_{u}+(3 g(u)+2 r(u)) \xi ^3{}_{uu}=0 ,
$$
$$
  	\xi ^2{}_{u}+2 \left(g' \xi ^3{}_{y}+r' \xi ^3{}_{y}+(g(u)+r(u)) \xi ^3{}_{yu}\right)=0 ,
$$
$$
 	\xi ^1{}_{u}+2 \left(g' \xi ^3{}_{x}+r' \xi ^3{}_{x}+(g(u)+r(u)) \xi ^3{}_{xu}\right)=0 ,
$$
$$
 	4 h^{\prime \prime \prime } \xi ^3{}_{u}+6 h^{\prime \prime } \xi ^3{}_{uu}+4 h' \xi ^3{}_{uuu}+h(u) \xi ^3{}_{uuuu}=0 ,
$$
$$
 	\xi ^2{}_{uu}+2 g^{\prime \prime } \xi ^3{}_{y}+\xi ^3{}_{yu}+4 g' \xi ^3{}_{yu}+2 g(u) \xi ^3{}_{yuu}=0 ,
$$
$$
	 \xi ^2{}_{uu}+2 r^{\prime \prime } \xi ^3{}_{y}+\xi ^3{}_{yu}+4 r' \xi ^3{}_{yu}+2 r(u) \xi ^3{}_{yuu}=0 ,
$$
$$
	 \xi ^1{}_{uu}+2 g^{\prime \prime } \xi ^3{}_{x}+\xi ^3{}_{xu}+4 g' \xi ^3{}_{xu}+2 g(u) \xi ^3{}_{xuu}=0 ,
$$
$$
  	3 \xi ^1{}_{yu}+3 \xi ^2{}_{xu}+2 \left(\left(1+r'\right) \xi ^3{}_{xy}+r(u) \xi ^3{}_{xyu}\right)=0 ,
$$
$$
  	 3 \xi ^2{}_{uu}+4 \left(r^{\prime \prime } \xi ^3{}_{y}+\left(1+2 r'\right) \xi ^3{}_{yu}+r(u) \xi ^3{}_{yuu}\right)=0 ,
$$
$$
    	 3 \xi ^2{}_{uu}+2 r^{\prime \prime } \xi ^3{}_{y}+5 \xi ^3{}_{yu}+4 r' \xi ^3{}_{yu}+2 r(u) \xi ^3{}_{yuu}=0 ,
$$
$$
 	5 \xi ^1{}_{uu}+4 r^{\prime \prime } \xi ^3{}_{x}+8 \xi ^3{}_{xu}+8 r' \xi ^3{}_{xu}+4 r(u) \xi ^3{}_{xuu}=0 ,
$$
$$
 	\xi ^2{}_{uuu}+2 h^{\prime \prime } \xi ^3{}_{y}+4 h' \xi ^3{}_{yu}+\xi ^3{}_{yuu}+2 h(u) \xi ^3{}_{yuu}=0 ,
$$
$$
 	\xi ^1{}_{uuu}+2 h^{\prime \prime } \xi ^3{}_{x}+4 h' \xi ^3{}_{xu}+\xi ^3{}_{xuu}+2 h(u) \xi ^3{}_{xuu}=0 ,
$$
$$
 	5 \xi ^1{}_{uu}+6 r^{\prime \prime } \xi ^3{}_{x}+7 \xi ^3{}_{xu}+12 r' \xi ^3{}_{xu}+6 r(u) \xi ^3{}_{xuu}=0 ,
$$
$$
 	h(u) \xi ^3{}_{u}+2 g' \xi ^3{}_{u}+2 r' \xi ^3{}_{u}+g(u) \xi ^3{}_{uu}+r(u) \xi ^3{}_{uu}=0 ,
$$
$$
  	3 \xi ^1{}_{u}+2 \left(\left(1+3 g'+r'\right) \xi ^3{}_{x}+(3 g(u)+r(u)) \xi ^3{}_{xu}\right)=0 ,
$$
$$
   	3 \xi ^1{}_{yu}+3 \xi ^2{}_{xu}+4 h(u) \xi ^3{}_{xy}+2 g' \xi ^3{}_{xy}+2 g(u) \xi ^3{}_{xyu}=0 ,
$$
$$
  	3 \left(3+8 r'\right) \xi ^3{}_{u}+14 r(u) \xi ^3{}_{uu}-4 \left(\xi ^3{}_{yyuu}+3 \xi ^3{}_{xxuu}\right)=0 ,
$$
$$
 	5 h(u) \xi ^3{}_{u}+2 g' \xi ^3{}_{u}+6 r' \xi ^3{}_{u}+g(u) \xi ^3{}_{uu}+3 r(u) \xi ^3{}_{uu}=0 ,
$$
$$
 	4 g^{\prime \prime \prime } \xi ^3{}_{u}+6 g^{\prime \prime } \xi ^3{}_{uu}+\xi ^3{}_{uuu}+4 g' \xi ^3{}_{uuu}+g(u) \xi ^3{}_{uuuu}=0 ,
$$
$$
  	 4 h' \xi ^3{}_{u}+3 r^{\prime \prime } \xi ^3{}_{u}+2 h(u) \xi ^3{}_{uu}+3 r' \xi ^3{}_{uu}+r(u) \xi ^3{}_{uuu}=0 ,
$$
$$
 	8 h' \xi ^3{}_{u}+3 g^{\prime \prime } \xi ^3{}_{u}+4 h(u) \xi ^3{}_{uu}+3 g' \xi ^3{}_{uu}+g(u) \xi ^3{}_{uuu}=0 ,
$$
$$
   	 4 r^{\prime \prime \prime } \xi ^3{}_{u}+6 r^{\prime \prime } \xi ^3{}_{uu}+7 \xi ^3{}_{uuu}+4 r' \xi ^3{}_{uuu}+r(u) \xi ^3{}_{uuuu}=0 ,
$$
$$
 	4 h^{\prime \prime \prime } \xi ^3{}_{u}+6 h^{\prime \prime } \xi ^3{}_{uu}+4 h' \xi ^3{}_{uuu}+\xi ^3{}_{uuuu}+h(u) \xi ^3{}_{uuuu}=0 ,
$$
$$
 	4 h(u) \xi ^3{}_{u}+4 g' \xi ^3{}_{u}+6 r' \xi ^3{}_{u}+2 g(u) \xi ^3{}_{uu}+3 r(u) \xi ^3{}_{uu}=0 ,
$$
$$ 
 	\xi ^1{}_{yuuu}+\xi ^2{}_{xuuu}+2 h^{\prime \prime } \xi ^3{}_{xy}+4 h' \xi ^3{}_{xyu}+2 h(u) \xi ^3{}_{xyuu}=0 ,
$$
$$
  	9 h(u) \xi ^3{}_{u}+12 g' \xi ^3{}_{u}+7 g(u) \xi ^3{}_{uu}-6 \xi ^3{}_{yyuu}-2 \xi ^3{}_{xxuu}=0 ,
$$
$$
  	\xi ^2{}_{uuuu}+4 \left(h^{\prime \prime \prime } \xi ^3{}_{y}+3 h^{\prime \prime } \xi ^3{}_{yu}+3 h' \xi ^3{}_{yuu}+h(u) \xi ^3{}_{yuuu}\right)=0 ,
$$
$$
 	\xi ^1{}_{uuuu}+4 \left(h^{\prime \prime \prime } \xi ^3{}_{x}+3 h^{\prime \prime } \xi ^3{}_{xu}+3 h' \xi ^3{}_{xuu}+h(u) \xi ^3{}_{xuuu}\right)=0 ,
$$
$$
 	26 h' \xi ^3{}_{u}+6 g^{\prime \prime } \xi ^3{}_{u}+13 h(u) \xi ^3{}_{uu}+6 g' \xi ^3{}_{uu}+2 g(u) \xi ^3{}_{uuu}=0 ,
$$
$$
  	4 \xi ^1{}_{u}+\left(5+12 r'\right) \xi ^3{}_{x}+14 r(u) \xi ^3{}_{xu}-4 \xi ^3{}_{xyyu}-4 \xi ^3{}_{xxxu}=0 ,
$$
$$
 	16 h^{\prime \prime \prime } \xi ^3{}_{u}+24 h^{\prime \prime } \xi ^3{}_{uu}+16 h' \xi ^3{}_{uuu}+\xi ^3{}_{uuuu}+4 h(u) \xi ^3{}_{uuuu}=0 ,
$$
$$
 	 \left(1+4 g'\right) \xi ^3{}_{x}+4 g(u) \xi ^3{}_{xu}+2 r(u) \xi ^3{}_{xu}-4 \xi ^3{}_{xyyu}-4 \xi ^3{}_{xxxu}=0 ,
$$
$$
    	2 \xi ^2{}_{u}+5 h(u) \xi ^3{}_{y}+6 g' \xi ^3{}_{y}+7 g(u) \xi ^3{}_{yu}-2 \xi ^3{}_{yyyu}-2 \xi ^3{}_{xxyu}=0 ,
$$
$$
      	\xi ^2{}_{uu}+2 \left(h' \xi ^3{}_{y}+r^{\prime \prime } \xi ^3{}_{y}+h(u) \xi ^3{}_{yu}+2 r' \xi ^3{}_{yu}+r(u) \xi ^3{}_{yuu}\right)=0 ,
$$
$$
         	\xi ^1{}_{uu}+2 \left(h' \xi ^3{}_{x}+g^{\prime \prime } \xi ^3{}_{x}+h(u) \xi ^3{}_{xu}+2 g' \xi ^3{}_{xu}+g(u) \xi ^3{}_{xuu}\right)=0 ,
$$
$$
 	\xi ^1{}_{uu}+2 \left(h' \xi ^3{}_{x}+r^{\prime \prime } \xi ^3{}_{x}+h(u) \xi ^3{}_{xu}+2 r' \xi ^3{}_{xu}+r(u) \xi ^3{}_{xuu}\right)=0 ,
$$
$$
	3 \xi ^2{}_{u}+4 h(u) \xi ^3{}_{y}+2 g' \xi ^3{}_{y}+6 r' \xi ^3{}_{y}+2 g(u) \xi ^3{}_{yu}+6 r(u) \xi ^3{}_{yu}=0 ,
$$
$$
 	4 h' \xi ^3{}_{u}+6 g^{\prime \prime } \xi ^3{}_{u}+\xi ^3{}_{uu}+2 h(u) \xi ^3{}_{uu}+6 g' \xi ^3{}_{uu}+2 g(u) \xi ^3{}_{uuu}=0 ,
$$
$$
 	4 h' \xi ^3{}_{u}+6 r^{\prime \prime } \xi ^3{}_{u}+\xi ^3{}_{uu}+2 h(u) \xi ^3{}_{uu}+6 r' \xi ^3{}_{uu}+2 r(u) \xi ^3{}_{uuu}=0 ,
$$
$$
       	\xi ^1{}_{uuu}+2 g^{\prime \prime \prime } \xi ^3{}_{x}+6 g^{\prime \prime } \xi ^3{}_{xu}+\xi ^3{}_{xuu}+6 g' \xi ^3{}_{xuu}+2 g(u) \xi ^3{}_{xuuu}=0 ,
$$
$$
  	\left(1+8 g'\right) \xi ^3{}_{u}+4 g(u) \xi ^3{}_{uu}+2 r(u) \xi ^3{}_{uu}-4 \xi ^3{}_{yyuu}-12 \xi ^3{}_{xxuu}=0 ,
$$
$$
 	3 \xi ^1{}_{yuu}+3 \xi ^2{}_{xuu}+2 \left(r^{\prime \prime } \xi ^3{}_{xy}+\left(1+2 r'\right) \xi ^3{}_{xyu}+r(u) \xi ^3{}_{xyuu}\right)=0 ,
$$
$$
 	6 h' \xi ^3{}_{u}+6 r^{\prime \prime } \xi ^3{}_{u}+2 \xi ^3{}_{uu}+3 h(u) \xi ^3{}_{uu}+6 r' \xi ^3{}_{uu}+2 r(u) \xi ^3{}_{uuu}=0 ,
$$
$$ 
  	h(u) \xi ^3{}_{y}+2 r' \xi ^3{}_{y}+g(u) \xi ^3{}_{yu}+2 r(u) \xi ^3{}_{yu}-2 \xi ^3{}_{yyyu}-2 \xi ^3{}_{xxyu}=0 ,
$$
$$
  	h(u) \xi ^3{}_{u}+4 r' \xi ^3{}_{u}+g(u) \xi ^3{}_{uu}+2 r(u) \xi ^3{}_{uu}-6 \xi ^3{}_{yyuu}-2 \xi ^3{}_{xxuu}=0 ,
$$
$$
  	3 \xi ^2{}_{uuu}+2 r^{\prime \prime \prime } \xi ^3{}_{y}+6 r^{\prime \prime } \xi ^3{}_{yu}+5 \xi ^3{}_{yuu}+6 r' \xi ^3{}_{yuu}+2 r(u) \xi ^3{}_{yuuu}=0 ,
$$
$$
 	5 \xi ^1{}_{uuu}+2 r^{\prime \prime \prime } \xi ^3{}_{x}+6 r^{\prime \prime } \xi ^3{}_{xu}+9 \xi ^3{}_{xuu}+6 r' \xi ^3{}_{xuu}+2 r(u) \xi ^3{}_{xuuu}=0 ,
$$
$$
  	12 h' \xi ^3{}_{u}+6 r^{\prime \prime } \xi ^3{}_{u}+5 \xi ^3{}_{uu}+6 h(u) \xi ^3{}_{uu}+6 r' \xi ^3{}_{uu}+2 r(u) \xi ^3{}_{uuu}=0 ,
$$
$$
 	5 \xi ^2{}_{uu}+4 \left(4 h' \xi ^3{}_{y}+g^{\prime \prime } \xi ^3{}_{y}+4 h(u) \xi ^3{}_{yu}+2 g' \xi ^3{}_{yu}+g(u) \xi ^3{}_{yuu}\right)=0 ,
$$
$$
  	\xi ^2{}_{uuuu}+2 h^{\prime \prime \prime } \xi ^3{}_{y}+6 h^{\prime \prime } \xi ^3{}_{yu}+6 h' \xi ^3{}_{yuu}+\xi ^3{}_{yuuu}+2 h(u) \xi ^3{}_{yuuu}=0 ,
$$
$$
 	\xi ^1{}_{uuuu}+2 h^{\prime \prime \prime } \xi ^3{}_{x}+6 h^{\prime \prime } \xi ^3{}_{xu}+6 h' \xi ^3{}_{xuu}+\xi ^3{}_{xuuu}+2 h(u) \xi ^3{}_{xuuu}=0 ,
$$
$$
    	3 \xi ^1{}_{uu}+4 \left(2 h' \xi ^3{}_{x}+g^{\prime \prime } \xi ^3{}_{x}+2 h(u) \xi ^3{}_{xu}+2 g' \xi ^3{}_{xu}+g(u) \xi ^3{}_{xuu}\right)=0 ,
$$
$$
 	3 \xi ^1{}_{uu}+2 \left(5 h' \xi ^3{}_{x}+g^{\prime \prime } \xi ^3{}_{x}+5 h(u) \xi ^3{}_{xu}+2 g' \xi ^3{}_{xu}+g(u) \xi ^3{}_{xuu}\right)=0 ,
$$
$$
   	20 h' \xi ^3{}_{u}+6 g^{\prime \prime } \xi ^3{}_{u}+3 \xi ^3{}_{uu}+10 h(u) \xi ^3{}_{uu}+6 g' \xi ^3{}_{uu}+2 g(u) \xi ^3{}_{uuu}=0 ,
$$
$$
     	5 \xi ^2{}_{uu}+2 \left(7 h' \xi ^3{}_{y}+3 g^{\prime \prime } \xi ^3{}_{y}+7 h(u) \xi ^3{}_{yu}+6 g' \xi ^3{}_{yu}+3 g(u) \xi ^3{}_{yuu}\right)=0 ,
$$
$$
 	16 h' \xi ^3{}_{u}+12 g^{\prime \prime } \xi ^3{}_{u}+3 \xi ^3{}_{uu}+8 h(u) \xi ^3{}_{uu}+12 g' \xi ^3{}_{uu}+4 g(u) \xi ^3{}_{uuu}=0 ,
$$
$$
 	\left(5+12 g'+4 r'\right) \xi ^3{}_{u}+6 g(u) \xi ^3{}_{uu}-4 \left(\xi ^3{}_{yyuu}+3 \xi ^3{}_{xxuu}-r(u) \xi ^3{}_{uu}\right)=0 ,
$$
$$
  	3 \xi ^1{}_{uu}+2 \left(3 h' \xi ^3{}_{x}+r^{\prime \prime } \xi ^3{}_{x}+\xi ^3{}_{xu}+3 h(u) \xi ^3{}_{xu}+2 r' \xi ^3{}_{xu}+r(u) \xi ^3{}_{xuu}\right)=0 ,
$$
$$
 	3 \xi ^2{}_{uu}+4 h' \xi ^3{}_{y}+2 g^{\prime \prime } \xi ^3{}_{y}+3 \xi ^3{}_{yu}+4 h(u) \xi ^3{}_{yu}+4 g' \xi ^3{}_{yu}+2 g(u) \xi ^3{}_{yuu}=0 ,
$$
$$
 	2 \xi ^1{}_{u}+\left(3+6 g'+2 r'\right) \xi ^3{}_{x}+6 g(u) \xi ^3{}_{xu}+4 r(u) \xi ^3{}_{xu}-4 \xi ^3{}_{xyyu}-4 \xi ^3{}_{xxxu}=0 ,
$$
$$
 	5 h(u) \xi ^3{}_{u}+2 g' \xi ^3{}_{u}+6 r' \xi ^3{}_{u}+2 g(u) \xi ^3{}_{uu}+3 r(u) \xi ^3{}_{uu}-6 \xi ^3{}_{yyuu}-2 \xi ^3{}_{xxuu}=0 ,
$$
$$
 	18 g^{\prime \prime } \xi ^3{}_{u}+6 r^{\prime \prime } \xi ^3{}_{u}+5 \xi ^3{}_{uu}+18 g' \xi ^3{}_{uu}+6 r' \xi ^3{}_{uu}+6 g(u) \xi ^3{}_{uuu}+2 r(u) \xi ^3{}_{uuu}=0 ,
$$
$$
 	3 \xi ^1{}_{yuu}+3 \xi ^2{}_{xuu}+4 h' \xi ^3{}_{xy}+2 g^{\prime \prime } \xi ^3{}_{xy}+4 h(u) \xi ^3{}_{xyu}+4 g' \xi ^3{}_{xyu}+2 g(u) \xi ^3{}_{xyuu}=0 ,
$$
$$
 	6 h^{\prime \prime } \xi ^3{}_{u}+4 r^{\prime \prime \prime } \xi ^3{}_{u}+6 h' \xi ^3{}_{uu}+6 r^{\prime \prime } \xi ^3{}_{uu}+2 h(u) \xi ^3{}_{uuu}+4 r' \xi ^3{}_{uuu}+r(u) \xi ^3{}_{uuuu}=0 ,
$$
$$
 	\xi ^2{}_{u}+3 h(u) \xi ^3{}_{y}+g' \xi ^3{}_{y}+3 r' \xi ^3{}_{y}+2 g(u) \xi ^3{}_{yu}+3 r(u) \xi ^3{}_{yu}-2 \xi ^3{}_{yyyu}-2 \xi ^3{}_{xxyu}=0 ,
$$
$$
 	g(u)^2 \xi ^3{}_{y}-2 f' \xi ^3{}_{y}+2 \eta _{yu}-2 f(u) \xi ^3{}_{yu}-3 \xi ^2{}_{yy}-\xi ^2{}_{xx}-2 g(u) \left(\xi ^3{}_{yyy}-\xi ^3{}_{xxy}\right)=0 ,
$$
$$
 	\xi ^1{}_{yy}-r(u)^2 \xi ^3{}_{x}+2 f' \xi ^3{}_{x}-2 \eta _{xu}+2 f(u) \xi ^3{}_{xu}+3 \xi ^1{}_{xx}+2 r(u) \left(\xi ^3{}_{xyy}+\xi ^3{}_{xxx}\right)=0 ,
$$
\begin{multline*}
  	2 \left(3 g^{\prime \prime } \xi ^3{}_{x}+r^{\prime \prime } \xi ^3{}_{x}+\xi ^3{}_{xu}+6 g' \xi ^3{}_{xu}+2 r' \xi ^3{}_{xu}+3 g(u) \xi ^3{}_{xuu}+r(u) \xi ^3{}_{xuu}\right)\\
	+3 \xi ^1{}_{uu}=0 ,
\end{multline*}
\begin{multline*}
  	g(u) \xi ^1{}_{y}-2 \xi ^1{}_{yyy}+r(u) \xi ^2{}_{x}-4 f' \xi ^3{}_{xy}+4 \eta _{xyu}-4 f(u) \xi ^3{}_{xyu}-2 \xi ^2{}_{xyy}-2 \xi ^1{}_{xxy}\\
	-2 \xi ^2{}_{xxx}=0 ,
\end{multline*}
\begin{multline*}
 	f(u) \xi ^3{}_{u}+\xi ^3{}_{t}-g(u) \xi ^3{}_{yy}-2 r(u) \xi ^3{}_{yy}+\xi ^3{}_{yyyy}-4 \xi ^1{}_{x}-7 r(u) \xi ^3{}_{xx}+2 \xi ^3{}_{xxyy}\\
	+\xi ^3{}_{xxxx}=0 ,
\end{multline*}
\begin{multline*}
 	f(u) \xi ^3{}_{u}+\xi ^3{}_{t}-4 \xi ^2{}_{y}-7 g(u) \xi ^3{}_{yy}+\xi ^3{}_{yyyy}-2 g(u) \xi ^3{}_{xx}-r(u) \xi ^3{}_{xx}+2 \xi ^3{}_{xxyy}\\
	+\xi ^3{}_{xxxx}=0 ,
\end{multline*}
\begin{multline*}
 	9 h^{\prime \prime } \xi ^3{}_{y}+g^{\prime \prime \prime } \xi ^3{}_{y}+18 h' \xi ^3{}_{yu}+3 g^{\prime \prime } \xi ^3{}_{yu}+9 h(u) \xi ^3{}_{yuu}+3 g' \xi ^3{}_{yuu}+g(u) \xi ^3{}_{yuuu}\\
	+\frac{5}{2} \xi ^2{}_{uuu}=0 ,
\end{multline*}
\begin{multline*}
 	5 h^{\prime \prime } \xi ^3{}_{x}+g^{\prime \prime \prime } \xi ^3{}_{x}+10 h' \xi ^3{}_{xu}+3 g^{\prime \prime } \xi ^3{}_{xu}+5 h(u) \xi ^3{}_{xuu}+3 g' \xi ^3{}_{xuu}+g(u) \xi ^3{}_{xuuu}\\
	+\frac{3}{2} \xi ^1{}_{uuu}=0 ,
\end{multline*}
\begin{multline*}
 	42 h^{\prime \prime } \xi ^3{}_{u}+4 g^{\prime \prime \prime } \xi ^3{}_{u}+42 h' \xi ^3{}_{uu}+6 g^{\prime \prime } \xi ^3{}_{uu}+14 h(u) \xi ^3{}_{uuu}+4 g' \xi ^3{}_{uuu}\\
	+g(u) \xi ^3{}_{uuuu}=0 ,
\end{multline*}
\begin{multline*}
 	4 f' \xi ^3{}_{u}-2 \eta _{uu}+2 f(u) \xi ^3{}_{uu}+4 \xi ^2{}_{yu}+3 \xi ^3{}_{yy}+2 h(u) \xi ^3{}_{yy}+4 \xi ^1{}_{xu}+\xi ^3{}_{xx}\\
	+6 h(u) \xi ^3{}_{xx}=0 ,
\end{multline*}
\begin{multline*}
 	10 h' \xi ^3{}_{u}+3 g^{\prime \prime } \xi ^3{}_{u}+9 r^{\prime \prime } \xi ^3{}_{u}+5 h(u) \xi ^3{}_{uu}+3 g' \xi ^3{}_{uu}+9 r' \xi ^3{}_{uu}+g(u) \xi ^3{}_{uuu}\\
	+3 r(u) \xi ^3{}_{uuu}=0 ,
\end{multline*}
\begin{multline*}
 	g(u) r(u) \xi ^3{}_{y}-2 f' \xi ^3{}_{y}+2 \eta _{yu}-2 f(u) \xi ^3{}_{yu}-\xi ^2{}_{yy}-2 r(u) \xi ^3{}_{yyy}-4 \xi ^1{}_{xy}-3 \xi ^2{}_{xx}\\
	-2 r(u) \xi ^3{}_{xxy}=0 ,
\end{multline*}
\begin{multline*}
   	 \frac{3}{2} \xi ^1{}_{uuu}+ 3 h^{\prime \prime } \xi ^3{}_{x}+r^{\prime \prime \prime } \xi ^3{}_{x}+6 h' \xi ^3{}_{xu}+3 r^{\prime \prime } \xi ^3{}_{xu}+\xi ^3{}_{xuu}+3 h(u) \xi ^3{}_{xuu}+3 r' \xi ^3{}_{xuu}\\
	 +r(u) \xi ^3{}_{xuuu}=0 ,
\end{multline*}
\begin{multline*}
	30 h^{\prime \prime } \xi ^3{}_{u}+8 g^{\prime \prime \prime } \xi ^3{}_{u}+30 h' \xi ^3{}_{uu}+12 g^{\prime \prime } \xi ^3{}_{uu}+3 \xi ^3{}_{uuu}+10 h(u) \xi ^3{}_{uuu}+8 g' \xi ^3{}_{uuu}\\
	+2 g(u) \xi ^3{}_{uuuu}=0 ,
\end{multline*}
\begin{multline*}
   	3 \xi ^2{}_{uuu}+4 h^{\prime \prime } \xi ^3{}_{y}+2 g^{\prime \prime \prime } \xi ^3{}_{y}+8 h' \xi ^3{}_{yu}+6 g^{\prime \prime } \xi ^3{}_{yu}+3 \xi ^3{}_{yuu}+4 h(u) \xi ^3{}_{yuu}+6 g' \xi ^3{}_{yuu}\\
	+2 g(u) \xi ^3{}_{yuuu}=0 ,
\end{multline*}
\begin{multline*}
 	18 h^{\prime \prime } \xi ^3{}_{u}+8 r^{\prime \prime \prime } \xi ^3{}_{u}+18 h' \xi ^3{}_{uu}+12 r^{\prime \prime } \xi ^3{}_{uu}+5 \xi ^3{}_{uuu}+6 h(u) \xi ^3{}_{uuu}+8 r' \xi ^3{}_{uuu}\\
	+2 r(u) \xi ^3{}_{uuuu}=0 ,
\end{multline*}
\begin{multline*}
 3 \xi ^1{}_{yy}+2 f' \xi ^3{}_{x}+4 \xi ^2{}_{xy}+\xi ^1{}_{xx}+g(u) \left(-r(u) \xi ^3{}_{x}+2 \left(\xi ^3{}_{xyy}+\xi ^3{}_{xxx}\right)\right)\\
 -2 \eta _{xu}+2 f(u) \xi ^3{}_{xu}=0 ,
\end{multline*}
\begin{multline*}
 	2 \left(h^{\prime \prime } \xi ^3{}_{y}+r^{\prime \prime \prime } \xi ^3{}_{y}+2 h' \xi ^3{}_{yu}+3 r^{\prime \prime } \xi ^3{}_{yu}+h(u) \xi ^3{}_{yuu}+3 r' \xi ^3{}_{yuu}\right.\\
	\left.+r(u) \xi ^3{}_{yuuu}\right)+\xi ^2{}_{uuu}=0 ,
\end{multline*}
\begin{multline*}  
  	 2 h' \xi ^3{}_{y}+g^{\prime \prime } \xi ^3{}_{y}+3 r^{\prime \prime } \xi ^3{}_{y}+2 h(u) \xi ^3{}_{yu}+2 g' \xi ^3{}_{yu}+6 r' \xi ^3{}_{yu}+g(u) \xi ^3{}_{yuu}\\
	+3 r(u) \xi ^3{}_{yuu}+\frac{3}{2} \xi ^2{}_{uu}=0 ,
\end{multline*}
\begin{multline*}
 	2 f^{\prime \prime } \xi ^3{}_{y}+4 f' \xi ^3{}_{yu}-2 \eta _{yuu}+2 f(u) \xi ^3{}_{yuu}+\xi ^2{}_{yyu}+\xi ^3{}_{yyy}+4 \xi ^1{}_{xyu}+3 \xi ^2{}_{xxu}+\xi ^3{}_{xxy}\\
	-\frac{1}{2}r(u) \xi ^2{}_{u}-\frac{1}{2}g(u) \xi ^3{}_{y}=0 ,
\end{multline*}
\begin{multline*}
 	4 f' \xi ^3{}_{u}-r(u)^2 \xi ^3{}_{u}-2 \eta _{uu}+2 f(u) \xi ^3{}_{uu}+\xi ^3{}_{yy}+2 r' \xi ^3{}_{yy}+8 \xi ^1{}_{xu}+3 \xi ^3{}_{xx}+6 r' \xi ^3{}_{xx}\\
	+2 r(u) \left(\xi ^3{}_{yyu}+3 \xi ^3{}_{xxu}\right)=0 ,
 \end{multline*}
\begin{multline*}
 	3 \eta _{uu}-6 f' \xi ^3{}_{u}-3 f(u) \xi ^3{}_{uu}-2 \xi ^3{}_{yy}-2 r' \xi ^3{}_{yy}-2 r(u) \xi ^3{}_{yyu}-12 \xi ^1{}_{xu}-6 \xi ^3{}_{xx}\\
	-6 r' \xi ^3{}_{xx}-6 r(u) \xi ^3{}_{xxu}=0 ,
 \end{multline*}
\begin{multline*}
 	f(u) \xi ^3{}_{u}+\xi ^3{}_{t}-2 \xi ^2{}_{y}-2 g(u) \xi ^3{}_{yy}-3 r(u) \xi ^3{}_{yy}+\xi ^3{}_{yyyy}-2 \xi ^1{}_{x}-3 g(u) \xi ^3{}_{xx}\\
	-2 r(u) \xi ^3{}_{xx}+2 \xi ^3{}_{xxyy}+\xi ^3{}_{xxxx}=0 ,
 \end{multline*}
\begin{multline*}
 	6 f^{\prime \prime } \xi ^3{}_{uu}+4 f' \xi ^3{}_{uuu}-\eta _{uuuu}+f(u) \xi ^3{}_{uuuu}+\xi ^3{}_{yyuu}+4 \xi ^1{}_{xuuu}+3 \xi ^3{}_{xxuu}\\
	-\frac{1}{4}\left(1-16 f^{\prime \prime \prime }\right) \xi ^3{}_{u}-\frac{1}{2}r(u) \xi ^3{}_{uu}=0 ,
 \end{multline*}
\begin{multline*}
 	3 \eta _{uu}-6 f' \xi ^3{}_{u}-3 f(u) \xi ^3{}_{uu}-12 \xi ^2{}_{yu}-12 h(u) \xi ^3{}_{yy}-6 g' \xi ^3{}_{yy}-6 g(u) \xi ^3{}_{yyu}\\
	-4 h(u) \xi ^3{}_{xx}-2 g' \xi ^3{}_{xx}-2 g(u) \xi ^3{}_{xxu}=0 ,
 \end{multline*}
 \begin{multline*}
 	4 f' \xi ^3{}_{u}-2 \eta _{uu}+2 f(u) \xi ^3{}_{uu}+4 \xi ^2{}_{yu}+3 \xi ^3{}_{yy}+2 g' \xi ^3{}_{yy}+4 \xi ^1{}_{xu}+\xi ^3{}_{xx}+6 g' \xi ^3{}_{xx}\\
	+g(u) \left(-r(u) \xi ^3{}_{u}+2 \xi ^3{}_{yyu}+6 \xi ^3{}_{xxu}\right)=0 ,
 \end{multline*}
 \begin{multline*}
 	2 f' \xi ^3{}_{u}-\eta _{uu}+f(u) \xi ^3{}_{uu}+4 \xi ^2{}_{yu}+3 h(u) \xi ^3{}_{yy}+3 g' \xi ^3{}_{yy}+h(u) \xi ^3{}_{xx}+g' \xi ^3{}_{xx}\\
	+ g(u) \left(3 \xi ^3{}_{yyu}+\xi ^3{}_{xxu}\right)-\frac{1}{2}g(u)^2 \xi ^3{}_{u}=0 ,
 \end{multline*}
\begin{multline*}
 	6 \xi ^1{}_{yyu}-g(u) \xi ^1{}_{u}-2 h(u) r(u) \xi ^3{}_{x}+4 f^{\prime \prime } \xi ^3{}_{x}+8 f' \xi ^3{}_{xu}-4 \eta _{xuu}+4 f(u) \xi ^3{}_{xuu}\\
	+8 \xi ^2{}_{xyu}+4 h(u) \xi ^3{}_{xyy}+2 \xi ^1{}_{xxu}+4 h(u) \xi ^3{}_{xxx}=0 ,
\end{multline*}
\begin{multline*}
	2 f' \xi ^3{}_{u}-\eta _{uu}+f(u) \xi ^3{}_{uu}+2 \xi ^2{}_{yu}+h(u) \xi ^3{}_{yy}+3 r' \xi ^3{}_{yy}+3 r(u) \xi ^3{}_{yyu}+2 \xi ^1{}_{xu}\\
	+3 h(u) \xi ^3{}_{xx}+r' \xi ^3{}_{xx}+r(u) \xi ^3{}_{xxu} -\frac{1}{2}g(u) r(u) \xi ^3{}_{u}=0 ,
\end{multline*}
\begin{multline*}
 	2 f' \xi ^3{}_{u}-\eta _{uu}+f(u) \xi ^3{}_{uu}+2 \xi ^2{}_{yu}+g' \xi ^3{}_{yy}+3 r' \xi ^3{}_{yy}+g(u) \xi ^3{}_{yyu}+3 r(u) \xi ^3{}_{yyu}\\
	+2 \xi ^1{}_{xu}+3 g' \xi ^3{}_{xx}+r' \xi ^3{}_{xx}+3 g(u) \xi ^3{}_{xxu}+r(u) \xi ^3{}_{xxu}=0 ,
 \end{multline*}
\begin{multline*}
 	2 h(u) \xi ^1{}_{y}+2 g(u) \xi ^1{}_{yu}-4 \xi ^1{}_{yyyu}+\xi ^2{}_{x}+2 r(u) \xi ^2{}_{xu}-8 f^{\prime \prime } \xi ^3{}_{xy}-16 f' \xi ^3{}_{xyu}\\
	+8 \eta _{xyuu}-8 f(u) \xi ^3{}_{xyuu}-4 \xi ^2{}_{xyyu}-4 \xi ^1{}_{xxyu}-4 \xi ^2{}_{xxxu}=0 ,
 \end{multline*}
\begin{multline*}
 	\xi ^1{}_{u}-4 \xi ^1{}_{yyuu}+\xi ^3{}_{x}-8 f^{\prime \prime \prime } \xi ^3{}_{x}-24 f^{\prime \prime } \xi ^3{}_{xu}+2 r(u) \left(\xi ^1{}_{uu}+\xi ^3{}_{xu}\right)-24 f' \xi ^3{}_{xuu}\\
	+8 \eta _{xuuu}-8 f(u) \xi ^3{}_{xuuu}-4 \xi ^3{}_{xyyu}-12 \xi ^1{}_{xxuu}-4 \xi ^3{}_{xxxu}=0 ,
\end{multline*}
\begin{multline*}
 	\eta _{t}-\eta (x,y,t,u) f'-f(u)^2 \xi ^3{}_{u}-g(u) \eta _{yy}+\eta _{yyyy}-r(u) \eta _{xx}+2 \eta _{xxyy}+\eta _{xxxx}\\
	+f(u) \left(\eta _{u}-\xi ^3{}_{t}+g(u) \xi ^3{}_{yy}-\xi ^3{}_{yyyy}+r(u) \xi ^3{}_{xx}-2 \xi ^3{}_{xxyy}-\xi ^3{}_{xxxx}\right)=0 ,
 \end{multline*}
\begin{multline*}
 	4 f^{\prime \prime \prime } \xi ^3{}_{y}-r(u) \xi ^2{}_{uu}-h(u) \xi ^3{}_{y}-g(u) \xi ^3{}_{yu}+12 f^{\prime \prime } \xi ^3{}_{yu}+12 f' \xi ^3{}_{yuu}-4 \eta _{yuuu}\\
	+4 f(u) \xi ^3{}_{yuuu}+2 \xi ^2{}_{yyuu}+2 \xi ^3{}_{yyyu}+8 \xi ^1{}_{xyuu}+6 \xi ^2{}_{xxuu}+2 \xi ^3{}_{xxyu}-\frac{1}{2}\xi ^2{}_{u}=0 ,
 \end{multline*}
\begin{multline*}
 	6 f^{\prime \prime } \xi ^3{}_{u}+6 f' \xi ^3{}_{uu}-2 \eta _{uuu}+2 f(u) \xi ^3{}_{uuu}+4 \xi ^2{}_{yuu}+2 h' \xi ^3{}_{yy}+3 \xi ^3{}_{yyu}+4 \xi ^1{}_{xuu}\\
	+6 h' \xi ^3{}_{xx}+\xi ^3{}_{xxu}+h(u) \left(-r(u) \xi ^3{}_{u}+2 \xi ^3{}_{yyu}+6 \xi ^3{}_{xxu}\right)-\frac{1}{2}g(u) \xi ^3{}_{u}=0 ,
 \end{multline*}
\begin{multline*}
 	2 f^{\prime \prime } \xi ^3{}_{x}+4 f' \xi ^3{}_{xu}-2 \eta _{xuu}+2 f(u) \xi ^3{}_{xuu}+4 \xi ^2{}_{xyu}+2 g' \xi ^3{}_{xyy}+\xi ^1{}_{xxu}+2 g' \xi ^3{}_{xxx}\\
	-r(u) g' \xi ^3{}_{x}+3 \xi ^1{}_{yyu}-\frac{1}{2}g(u) \left(\xi ^3{}_{x}+2 r(u) \xi ^3{}_{xu}-4 \left(\xi ^3{}_{xyyu}+\xi ^3{}_{xxxu}\right)\right)=0 ,
 \end{multline*}
 \begin{multline*}
 	 3 \xi ^1{}_{yyu}+6 f^{\prime \prime } \xi ^3{}_{x}+12 f' \xi ^3{}_{xu}-6 \eta _{xuu}+6 f(u) \xi ^3{}_{xuu}+2 \xi ^3{}_{xyy}+2 r' \xi ^3{}_{xyy}+9 \xi ^1{}_{xxu}\\
	+2 \xi ^3{}_{xxx}+ r(u)^2 \xi ^3{}_{xu}-\frac{1}{2}r(u) \left(2 \xi ^1{}_{u}+\left(3+2 r'\right) \xi ^3{}_{x}-4 \left(\xi ^3{}_{xyyu}+\xi ^3{}_{xxxu}\right)\right)\\
	+2 r' \xi ^3{}_{xxx}=0 ,
\end{multline*}
\begin{multline*}
 	2 f^{\prime \prime } \xi ^3{}_{y}+4 f' \xi ^3{}_{yu}-g(u) \left(r' \xi ^3{}_{y}+r(u) \xi ^3{}_{yu}\right)+2 f(u) \xi ^3{}_{yuu}-h(u) r(u) \xi ^3{}_{y}\\
	-2 \eta _{yuu}+\xi ^2{}_{yyu}+2 r' \xi ^3{}_{yyy}+2 r(u) \xi ^3{}_{yyyu}+4 \xi ^1{}_{xyu}+3 \xi ^2{}_{xxu}+2 r' \xi ^3{}_{xxy}\\
	+2 r(u) \xi ^3{}_{xxyu}=0 ,
	\end{multline*}
\begin{multline*}
 	h(u)^2 \xi ^3{}_{u}+2 g(u) h' \xi ^3{}_{u}-4 f^{\prime \prime \prime } \xi ^3{}_{u}-6 f^{\prime \prime } \xi ^3{}_{uu}-4 f' \xi ^3{}_{uuu}+\eta _{uuuu}-f(u) \xi ^3{}_{uuuu}\\
	-12 h' \xi ^3{}_{yyu}-2 h^{\prime \prime } \xi ^3{}_{xx}-4 h' \xi ^3{}_{xxu}+h(u) \left(g(u) \xi ^3{}_{uu}-2 \left(3 \xi ^3{}_{yyuu}+\xi ^3{}_{xxuu}\right)\right)\\
	-6 h^{\prime \prime } \xi ^3{}_{yy}-4 \xi ^2{}_{yuuu}=0 ,
 \end{multline*}
 \begin{multline*}
 	-\xi ^1{}_{t}+g(u) \xi ^1{}_{yy}-\xi ^1{}_{yyyy}-\eta _{x}+2 r(u) f' \xi ^3{}_{x}-2 r(u) \eta _{xu}-4 f' \xi ^3{}_{xyy}+4 \eta _{xyyu}\\
	+f(u) \left(-\xi ^1{}_{u}+\xi ^3{}_{x}+2 r(u) \xi ^3{}_{xu}-4 \xi ^3{}_{xyyu}-4 \xi ^3{}_{xxxu}\right)-\xi ^1{}_{xxxx}+r(u) \xi ^1{}_{xx}\\
	-2 \xi ^1{}_{xxyy}-4 f' \xi ^3{}_{xxx}+4 \eta _{xxxu}=0 ,
 \end{multline*}
\begin{multline*}
 	18 f^{\prime \prime } \xi ^3{}_{u}+18 f' \xi ^3{}_{uu}-6 \eta _{uuu}+6 f(u) \xi ^3{}_{uuu}+2 r^{\prime \prime } \xi ^3{}_{yy}+5 \xi ^3{}_{yyu}+4 r' \xi ^3{}_{yyu}\\
	+12 r' \xi ^3{}_{xxu}- r(u)^2 \xi ^3{}_{uu}-\frac{1}{2} r(u) \left(\left(5+4 r'\right) \xi ^3{}_{u}-4 \left(\xi ^3{}_{yyuu}+3 \xi ^3{}_{xxuu}\right)\right)\\
	+15 \xi ^3{}_{xxu}+24 \xi ^1{}_{xuu}+6 r^{\prime \prime } \xi ^3{}_{xx}=0 ,
 \end{multline*}
 \begin{multline*}
 	3 \xi ^1{}_{yyuu}-r(u) h' \xi ^3{}_{x}+2 f^{\prime \prime \prime } \xi ^3{}_{x}+6 f^{\prime \prime } \xi ^3{}_{xu}+6 f' \xi ^3{}_{xuu}-2 \eta _{xuuu}+2 f(u) \xi ^3{}_{xuuu}\\
		+4 \xi ^2{}_{xyuu}-\frac{1}{2}g(u) \xi ^1{}_{uu}-\frac{1}{2}h(u) \left(\xi ^1{}_{u}+\xi ^3{}_{x}+2 r(u) \xi ^3{}_{xu}-4 \xi ^3{}_{xyyu}-4 \xi ^3{}_{xxxu}\right)\\
		+2 h' \xi ^3{}_{xyy}+\xi ^1{}_{xxuu}+2 h' \xi ^3{}_{xxx}=0 ,
 \end{multline*}
\begin{multline*}
 	6 g^{\prime \prime } \xi ^3{}_{xx}-2 r(u) g' \xi ^3{}_{u}+6 f^{\prime \prime } \xi ^3{}_{u}+6 f' \xi ^3{}_{uu}-2 \eta _{uuu}+2 f(u) \xi ^3{}_{uuu}+4 \xi ^2{}_{yuu}\\
	+2 g^{\prime \prime } \xi ^3{}_{yy}+\xi ^3{}_{xxu}+12 g' \xi ^3{}_{xxu}-\frac{1}{2}g(u) \left(\xi ^3{}_{u}+2 r(u) \xi ^3{}_{uu}-4 \xi ^3{}_{yyuu}-12 \xi ^3{}_{xxuu}\right)\\
	+3 \xi ^3{}_{yyu}+4 g' \xi ^3{}_{yyu}+4 \xi ^1{}_{xuu}=0 ,
 \end{multline*}
\begin{multline*}
 	2 g(u) f' \xi ^3{}_{y}-\xi ^2{}_{t}-2 h(u) \eta _{y}-2 g(u) \eta _{yu}+g(u) \xi ^2{}_{yy}-4 f' \xi ^3{}_{yyy}+4 \eta _{yyyu}-\xi ^2{}_{yyyy}\\
	-f(u) \left(\xi ^2{}_{u}-2 h(u) \xi ^3{}_{y}-2 g(u) \xi ^3{}_{yu}+4 \xi ^3{}_{yyyu}+4 \xi ^3{}_{xxyu}\right)-2 \xi ^2{}_{xxyy}-\xi ^2{}_{xxxx}\\
	+r(u) \xi ^2{}_{xx}-4 f' \xi ^3{}_{xxy}+4 \eta _{xxyu}=0 ,
 \end{multline*}
\begin{multline*}
 	6 f^{\prime \prime } \xi ^3{}_{y}-g(u)^2 \xi ^3{}_{yu}+12 f' \xi ^3{}_{yu}-6 \eta _{yuu}+6 f(u) \xi ^3{}_{yuu}+9 \xi ^2{}_{yyu}+4 h(u) \xi ^3{}_{yyy}\\
	+2 g' \xi ^3{}_{xxy}-g(u) \left(\xi ^2{}_{u}+3 h(u) \xi ^3{}_{y}+g' \xi ^3{}_{y}-2 \xi ^3{}_{yyyu}-2 \xi ^3{}_{xxyu}\right)\\
	+2 g' \xi ^3{}_{yyy}+3 \xi ^2{}_{xxu}+4 h(u) \xi ^3{}_{xxy}=0 ,
 \end{multline*}
\begin{multline*}
 	2 f^{\prime \prime \prime } \xi ^3{}_{y}+6 f^{\prime \prime } \xi ^3{}_{yu}+6 f' \xi ^3{}_{yuu}-2 \eta _{yuuu}+2 f(u) \xi ^3{}_{yuuu}+3 \xi ^2{}_{yyuu}+2 h' \xi ^3{}_{yyy}\\
	+2 h' \xi ^3{}_{xxy}- h(u)^2 \xi ^3{}_{y}-\frac{1}{2}h(u) \left(\xi ^2{}_{u}+2 g(u) \xi ^3{}_{yu}-4 \left(\xi ^3{}_{yyyu}+\xi ^3{}_{xxyu}\right)\right)\\
	+\xi ^2{}_{xxuu}-\frac{1}{2}g(u) \left(\xi ^2{}_{uu}+2 h' \xi ^3{}_{y}\right)=0 ,
\end{multline*}
\begin{multline*}
	 2 r(u) \xi ^1{}_{x}-\eta (x,y,t,u) r'-r(u) \xi ^3{}_{t}+g(u) r(u) \xi ^3{}_{yy}-2 f' \xi ^3{}_{yy}+2 \eta _{yyu}\\
	-r(u) \xi ^3{}_{yyyy} -4 \xi ^1{}_{xyy}+r(u)^2 \xi ^3{}_{xx}-6 f' \xi ^3{}_{xx}+6 \eta _{xxu}-2 r(u) \xi ^3{}_{xxyy}-4 \xi ^1{}_{xxx}\\
	-r(u) \xi ^3{}_{xxxx}-f(u) \left(r(u) \xi ^3{}_{u}+2 \xi ^3{}_{yyu}+6 \xi ^3{}_{xxu}\right)=0 ,
\end{multline*}
\begin{multline*}
	g(u)^2 \xi ^3{}_{yy} -\eta (x,y,t,u) g'-g(u) \xi ^3{}_{t}+2 g(u) \xi ^2{}_{y}-6 f' \xi ^3{}_{yy}+6 \eta _{yyu}-4 \xi ^2{}_{yyy}\\
	 +g(u) r(u) \xi ^3{}_{xx}-2 f' \xi ^3{}_{xx}+2 \eta _{xxu}-f(u) \left(g(u) \xi ^3{}_{u}+6 \xi ^3{}_{yyu}+2 \xi ^3{}_{xxu}\right)\\
	 -g(u) \xi ^3{}_{yyyy}-4 \xi ^2{}_{xxy}-2 g(u) \xi ^3{}_{xxyy}-g(u) \xi ^3{}_{xxxx}=0 ,
\end{multline*}
\begin{multline*}
 	9 f^{\prime \prime } \xi ^3{}_{u}+9 f' \xi ^3{}_{uu}+3 f(u) \xi ^3{}_{uuu}+12 \xi ^2{}_{yuu}+15 h' \xi ^3{}_{yy}+3 g^{\prime \prime } \xi ^3{}_{yy}+6 g' \xi ^3{}_{yyu}\\
	-3 \eta _{uuu}+15 h(u) \xi ^3{}_{yyu}+5 h' \xi ^3{}_{xx}+g^{\prime \prime } \xi ^3{}_{xx}+5 h(u) \xi ^3{}_{xxu}+2 g' \xi ^3{}_{xxu}-\frac{1}{2}g(u)^2 \xi ^3{}_{uu}\\
	-\frac{1}{2}g(u) \left(5 h(u) \xi ^3{}_{u}+2 g' \xi ^3{}_{u}-6 \xi ^3{}_{yyuu}-2 \xi ^3{}_{xxuu}\right)=0 ,
 \end{multline*}
\begin{multline*}
 	4 r(u) h' \xi ^3{}_{u}-16 f^{\prime \prime \prime } \xi ^3{}_{u}+g(u) \xi ^3{}_{uu}-24 f^{\prime \prime } \xi ^3{}_{uu}-16 f' \xi ^3{}_{uuu}+4 \eta _{uuuu}-8 \xi ^2{}_{yuuu}\\
	-4 f(u) \xi ^3{}_{uuuu}-4 h^{\prime \prime } \xi ^3{}_{yy}-8 h' \xi ^3{}_{yyu}-6 \xi ^3{}_{yyuu}-8 \xi ^1{}_{xuuu}-12 h^{\prime \prime } \xi ^3{}_{xx}-2 \xi ^3{}_{xxuu}\\
	-24 h' \xi ^3{}_{xxu}+2 h(u) \left(\xi ^3{}_{u}+r(u) \xi ^3{}_{uu}-2 \xi ^3{}_{yyuu}-6 \xi ^3{}_{xxuu}\right)=0 ,
 \end{multline*}
\begin{multline*}
	3 f^{\prime \prime } \xi ^3{}_{u}+3 f' \xi ^3{}_{uu}-\eta _{uuu}+f(u) \xi ^3{}_{uuu}+2 \xi ^2{}_{yuu}+h' \xi ^3{}_{yy}+3 r^{\prime \prime } \xi ^3{}_{yy}+6 r' \xi ^3{}_{yyu}\\
	+3 h' \xi ^3{}_{xx}+r^{\prime \prime } \xi ^3{}_{xx}+2 r' \xi ^3{}_{xxu}+r(u) \xi ^3{}_{xxuu}-\frac{1}{2}g(u) \left(2 r' \xi ^3{}_{u}+r(u) \xi ^3{}_{uu}\right)\\
	-\frac{1}{2}h(u) \left(r(u) \xi ^3{}_{u}-2 \left(\xi ^3{}_{yyu}+3 \xi ^3{}_{xxu}\right)\right)+3 r(u) \xi ^3{}_{yyuu}+2 \xi ^1{}_{xuu}=0 ,
 \end{multline*}
\begin{multline*}
 	2 \xi ^1{}_{x}-\eta _{u}-\xi ^3{}_{t}+g(u) \xi ^3{}_{yy}-4 f^{\prime \prime } \xi ^3{}_{yy}-8 f' \xi ^3{}_{yyu}+4 \eta _{yyuu}-4 f(u) \xi ^3{}_{yyuu}-\xi ^3{}_{yyyy}\\
	-8 \xi ^1{}_{xyyu}-12 f^{\prime \prime } \xi ^3{}_{xx}+r(u) \left(4 f' \xi ^3{}_{u}-2 \eta _{uu}+2 f(u) \xi ^3{}_{uu}+4 \xi ^1{}_{xu}+\xi ^3{}_{xx}\right)\\
	-24 f' \xi ^3{}_{xxu}+12 \eta _{xxuu}-12 f(u) \xi ^3{}_{xxuu}-2 \xi ^3{}_{xxyy}-8 \xi ^1{}_{xxxu}-\xi ^3{}_{xxxx}=0 ,
 \end{multline*}
\begin{multline*}
 	2 g(u) \xi ^2{}_{yu}-\eta (x,y,t,u) h'+2 g(u) f' \xi ^3{}_{u}-g(u) \eta _{uu}+f(u) g(u) \xi ^3{}_{uu}-6 f^{\prime \prime } \xi ^3{}_{yy}\\
	-12 f' \xi ^3{}_{yyu}+6 \eta _{yyuu}-6 f(u) \xi ^3{}_{yyuu}-4 \xi ^2{}_{yyyu}-2 f^{\prime \prime } \xi ^3{}_{xx}-4 f' \xi ^3{}_{xxu}+2 \eta _{xxuu}\\
	-h(u) \left(\eta _{u}+\xi ^3{}_{t}-2 \xi ^2{}_{y}-g(u) \xi ^3{}_{yy}+\xi ^3{}_{yyyy}-r(u) \xi ^3{}_{xx}+2 \xi ^3{}_{xxyy}+\xi ^3{}_{xxxx}\right)\\
	-2 f(u) \xi ^3{}_{xxuu}-4 \xi ^2{}_{xxyu}=0 ,
\end{multline*}

\bibliographystyle{plain}
\bibliography{Bibliography}

\begin{thebibliography}{10}

\bibitem{BluAncChe2k9}
G.~W. Bluman, A.~F. Cheviakov, and S.~C. Anco.
\newblock {\em Applications of symmetry methods to partial differential
  equations}.
\newblock Applied Mathematical Sciences. Springer, New York, 2009.

\bibitem{BluKu89}
G.~W. Bluman and S.~Kumei.
\newblock {\em Symmetries and differential equations}.
\newblock Springer, New York, 1989.

\bibitem{BoChoHwa99}
B.~M. Boghosian, C.~C. Chow, and T.~Hwa.
\newblock Hydrodynamics of the {K}uramoto-{S}ivashinsky equation in two
  dimensions.
\newblock {\em Phys. Rev. Lett.}, 83(25):5262--5265, 1999.

\bibitem{BruGaIbra2k9}
M.~S. Bruz\'on, M.~L. Gandarias, and N.~H. Ibragimov.
\newblock Self-adjoint sub-classes of generalized thin film equations.
\newblock {\em J. Math. Anal. Appl.}, 357:307--313, 2009.

\bibitem{CoKroTaRo76}
B.~Cohen, J.~Krommes, W.~Tang, and M.~Rosenbluth.
\newblock Nonlinear saturation of the dissipative trapped-ion mode by mode
  coupling.
\newblock {\em Nucl. Fus.}, 16:971--992, 1976.

\bibitem{CoMu89}
R.~Conte and M.~Musette.
\newblock Painlev\'e analysis and {B}\:acklund tranformation in the
  {K}uramoto-{S}ivashinsky equation.
\newblock {\em J. Phys. A: Math. Gen.}, 22:169--177, 1989.

\bibitem{CroHo93}
M.~C. Cross and P.~C. Hohenberg.
\newblock Pattern formation outside of equilibrium.
\newblock {\em Rev. Mod. Phys.}, 65(3):851--1123, July 1993.

\bibitem{De2k9}
A.~Demirkaya.
\newblock The existence of a global attractor for a {K}uramoto-{S}ivashinsky
  type equation in 2d.
\newblock {\em Discrete Contin. Dyn. Syst}, pages 198--207, 2009.

\bibitem{Dimas2k8}
S.~Dimas.
\newblock Partial differential equations, algebraic computing and nonlinear
  systems.
\newblock Ph.{D}. {T}hesis, University of Patras, Patras, Greece, October 2008.

\bibitem{DiTs2k5a}
S.~Dimas and D.~Tsoubelis.
\newblock {SYM}: A new symmetry-finding package for {M}athematica.
\newblock In N.H. Ibragimov, C.~Sophocleous, and P.A. Damianou, editors, {\em
  The 10$^{th}$ International Conference in {MO}dern {GR}oup {AN}alysis}, pages
  64--70, Nicosia, 2005. University of Cyprus.

\bibitem{DiTs2k6}
S.~Dimas and D.~Tsoubelis.
\newblock A new {M}athematica-based program for solving overdetermined systems
  of {PDEs}.
\newblock In Y.~Papegay, editor, {\em Applied {Mathematica}, {E}lectronic
  {P}roceedings of the {E}ighth {I}nternational {M}athematica {S}ymposium
  (IMS'06)}, Avignon, France, 2006. France: INRIA.
\newblock ISBN 2-7261-1289-7.

\bibitem{DroZhaLuWa99}
Jason~T. Drotar, Y.-P. Zhao, T.-M. Lu, and G.-C. Wang.
\newblock Numerical analysis of the noisy {K}uramoto-{S}ivashinsky equation in
  $2+1$ dimensions.
\newblock {\em Phys. Rev. E}, 59(1):177--185, 1999.

\bibitem{GaMiPo2k9}
V.~A. Galaktionov, E.~Mitidieri, and S.~I. Pohozaev.
\newblock On global solutions and blow-up for {K}uramoto-{S}ivashinsky-type
  models, and well-posed {B}urnett equations.
\newblock {\em Nonlinear Anal.}, 70:2930--2952, 2009.

\bibitem{HaZha95}
T.~Halpin-Healy and Y.-C. Zhang.
\newblock Kinetic roughening phenomena, stochastic growth, directed polymers
  and all that. {A}spects of multidisciplinary statistical mechanics.
\newblock {\em Phys. Rep.}, 254:215--414, 1995.

\bibitem{Hydon2k}
P.~E. Hydon.
\newblock {\em Symmetry Methods for Differential Equations}.
\newblock Cambridge Texts in Applied Mathematics. Cambridge University Press,
  Cambridge, 1$^{st}$ edition, 2000.

\bibitem{HyNi86}
J.~M. Hyman and B.~Nicolaenko.
\newblock The {K}uramoto-{S}ivashinsky equation: A bridge between pdes and
  dynamical systems.
\newblock {\em Physica D}, 18:113--126, 1986.

\bibitem{HyNiZa86}
J.~M. Hyman, B.~Nicolaenko, and S.~Zaleski.
\newblock Order and complexity in the {K}uramoto-{S}ivashinsky model of weakly
  turbulent interfaces.
\newblock {\em Physica D}, 23:265--292, 1986.

\bibitem{Ibra2k1}
N.~H. Ibragimov.
\newblock {\em Transformation Groups Applied to Mathematical Physics}.
\newblock Mathematics and its Applications. Springer, 1$^{st}$ edition,
  November 2001.

\bibitem{Ibra2k7}
N.H. Ibragimov.
\newblock A new conservation theorem.
\newblock {\em J. Math. Anal. App}, 333:311--328, 2007.

\bibitem{Ibr2k11a}
N.H. Ibragimov.
\newblock Nonlinear self-adjointness and conservation laws.
\newblock {\em J. Phys. A: Math. Theor.}, 44:432002, 2011.

\bibitem{Ibr2k11b}
N.H. Ibragimov.
\newblock Nonlinear self-adjointness in constructing conservation laws.
\newblock {\em Archives of ALGA}, 7/8:1--90, 2011.

\bibitem{JaHaPa93}
C.~Jayaprakash, F.~Hayot, and R.~Pandit.
\newblock Universal properties of the two-dimensional {K}uramoto-{S}ivashinsky
  equation.
\newblock {\em Phys. Rev. Lett.}, 71(1):12--15, 1993.

\bibitem{KeNiSco90}
I.~G. Kevrekidis, B.~Nicolaenko, and J.~C. Scovel.
\newblock Back in the saddle again: A computer assisted study of the
  {K}uramoto-{S}ivashinsky equation.
\newblock {\em SIAM J. Appl. Math.}, 50:760--790, 1990.

\bibitem{Ku78}
Y.~Kuramoto.
\newblock Diffusion-induced chaos in reactions systems.
\newblock {\em Prog. Theor. Phys.}, 64:346--367, 1978.

\bibitem{KuTsu75}
Y.~Kuramoto and T.~Tsuzuki.
\newblock On the formation of dissipative structures in reaction--diffusion
  systems.
\newblock {\em Prog. Theor. Phys.}, 54:687--699, 1975.

\bibitem{KuTsu76}
Y.~Kuramoto and T.~Tsuzuki.
\newblock Persistent propagation of concentration waves in dissipative media
  far from thermal equilibrium.
\newblock {\em Prog. Theor. Phys.}, 55(2):356--369, 1976.

\bibitem{LaMaRuTa75}
R.~La{Q}uey, S.~Mahajan, P.~Rutherford, and W.~Tang.
\newblock Nonlinear saturation of the trapped-ion mode.
\newblock {\em Phys. Rev. Lett.}, 34:391--394, 1975.

\bibitem{LuBo98}
J.~Lundbek~Hansen and T.~Bohr.
\newblock Fractal tracer distributions in turbulent field theories.
\newblock {\em Physica D}, 118:40--48, July 1998.

\bibitem{LvoLePaPro93}
V.~S. L'vov, V.~V. Lebedev, M.~Paton, and I.~Procaccia.
\newblock Proof of scale invariant solutions in the {K}ardar-{P}arisi-{Z}hang
  and {K}uramoto-{S}ivashinsky equations in $1+1$ dimensions: analytical and
  numerical results.
\newblock {\em Nonlinearity}, 6:25--47, 1993.

\bibitem{MaBa97}
M.~A. Makeev and A.~L. Barab\'asi.
\newblock Ion-induced effective surface diffusion in ion sputtering.
\newblock {\em Appl. Phys. Lett.}, 71:2800--2802, 1997.

\bibitem{Ma88}
P.~Manneville.
\newblock The {K}uramoto-{S}ivashinsky equation: a progress report.
\newblock In J.~Wesfreid, H.~R. Brand, P.~Manneville, G.~Albinet, and
  N.~Boccara, editors, {\em Propagation in Systems Far from Equilibrium:
  Proceedings of the Workshop}, Springer Series in Synergetics, pages 265--280,
  Les Houches, France, March 1988. Springer.

\bibitem{Mi86}
D.~Michelson.
\newblock Steady solutions of the {K}uramoto-{S}ivashinsky equation.
\newblock {\em Physica D}, 19:89--111, 1986.

\bibitem{MiSi77}
D.~M. Michelson and G.~I. Sivashinsky.
\newblock Nonlinear analysis of hydrodynamic instability in laminar flames --
  {II}. {N}umerical experiments.
\newblock {\em Acta Astronaut.}, 4:1207--1221, 1977.

\bibitem{NaAha2k11}
Mehdi Nadjafikhah and Fatemeh Ahangari.
\newblock Symmetry reduction of two-dimensional damped
  {K}uramoto--{S}ivashinsky equation.
\newblock {\em Commun. Theor. Phys.}, 56:211--217, 2011.

\bibitem{NaAha2k12}
Mehdi Nadjafikhah and Fatemeh Ahangari.
\newblock Lie symmetry analysis of the two-dimensional generalized
  {K}uramoto-{S}ivashinsky equation.
\newblock {\em Math. Sci.}, 6(1):3, 2012.

\bibitem{NiScheTe85}
B.~Nicolaenko, B.~Scheurer, and R.~Temam.
\newblock Attractors for the {K}uramoto-{S}ivashinsky equations.
\newblock {\em Physica D}, 16:155--183, 1985.

\bibitem{Olver2k}
P.~J. Olver.
\newblock {\em Applications of Lie Groups to Differential Equations}, volume
  107 of {\em Graduate Texts in Mathematics}.
\newblock Springer, New York, 2$^{nd}$ edition, 2000.

\bibitem{Ovsiannikov82}
L.~Ovsiannikov.
\newblock {\em Group Analysis of Differential Equations}.
\newblock Academic Press, 1$^{st}$ edition, June 1982.
\newblock 432 pages.

\bibitem{RoKru95}
M.~Rost and J.~Krug.
\newblock Anisotropic {K}uramoto-{S}ivashinsky equation for surface growth and
  erosion.
\newblock {\em Phys. Rev. Lett.}, 75:3894--3897, 1995.

\bibitem{Si77}
G.~I. Sivashinsky.
\newblock Nonlinear analysis of hydrodynamic instability in laminar flames --
  {I}. {D}erivation of basic equations.
\newblock {\em Acta Astronaut.}, 4:1177--1206, June 1977.

\bibitem{Si80}
G.~I. Sivashinsky.
\newblock On flame propagation under conditions of stoichiometry.
\newblock {\em SIAM J. Appl. Math.}, 39:67--82, 1980.

\bibitem{SiMi80}
G.~I. Sivashinsky and D.~Michelson.
\newblock On irregular wavy flow of a liquid film down a vertical plane.
\newblock {\em Prog. Theor. Phys.}, 63:2112--2114, 1980.

\bibitem{Stephani90}
H.~Stephani.
\newblock {\em Differential Equations: Their Solution Using Symmetries}.
\newblock Cambridge University Press, Cambridge, 1$^{st}$ edition, 1990.
\newblock Editor: MacCallum, Malcolm.

\bibitem{Te97}
R.~Temam.
\newblock {\em Infinite-dimensional dynamical systems in mechanics and
  physics}, volume~68 of {\em Applied Mathematical Sciences}.
\newblock Springer-Verlag, New York, second edition, 1997.

\bibitem{ThuFri86}
O.~Thuai and U.~Frisch.
\newblock {\em Natural boundary in the {K}uramoto-{S}ivashinsky model}, pages
  327--336.
\newblock Les Ulis : Ed. de Physique, Les Houches, March 1986.

\bibitem{EoM22687}
R.~W. Wittenberg.
\newblock {\em Encyclopaedia of {M}athematics, {S}upplement {III}}, pages
  230--233.
\newblock Kluwer, 2002.

\bibitem{WiHo99}
R.~W. Wittenberg and P.~Holmes.
\newblock Scale and space localization in the {K}uramoto--{S}ivashinsky
  equation.
\newblock {\em Chaos}, 9(2):452--465, 1999.

\end{thebibliography}

\end{document}